\documentclass[12pt,letterpaper]{amsart}
\usepackage{sty_alg_G_degree}

\begin{document}
\title{Local multiplicities for an equivariantly enriched non-transverse B\'ezout's Theorem}

\author[Bethea]{Candace Bethea}
\author[Ravi]{Charanya Ravi}

\begin{abstract}
We introduce the degree and local degree in equivariant motivic homotopy theory for the purpose of studying equivariant enumerative problems over general fields. Given a finite, tame group scheme $G$ over a field $k$ and an equivariant motivic ring spectrum $E_G$, we define the equivariant motivic degree and a corresponding local degree of a relatively $E_G$-oriented, proper, quasi-smooth morphism of $G$-schemes. We prove a local to global formula expressing the global degree as a sum of local contributions over $G$-orbits.

Using these constructions, we define the Euler number of an oriented vector bundle on a quasi-smooth, proper derived stack and show that the Euler number is independent of the choice of section under appropriate hypotheses. In the presence of a finite group action, the equivariant Euler number can be computed as a sum of local equivariant degrees. As an application, we obtain an equivariantly enriched local multiplicity formula for an equivariant non-transverse B\'ezout theorem, expressing an equivariant intersection number as a sum of local equivariant degrees.
\end{abstract}
\maketitle 

\begingroup
\setlength{\parskip}{3pt}
\renewcommand{\baselinestretch}{0.25}\normalsize
\tableofcontents
\renewcommand{\baselinestretch}{1.0}\normalsize
\endgroup

\section{Introduction}\label{section:Introduction}

Enriched enumerative geometry has seen a flurry of activity over recent decades, spurred by advancements in real, quadratically enriched, equivariantly enriched, and random enumerative geometry. See \cite{bb-history} for a historical overview. Quadratically enriched and, more recently, equivariantly enriched, enumerative geometry are of particular interest to homotopy theorists working in enumerative geometry, leveraging invariants in motivic and equivariant homotopy theory respectively to enrich classical enumerative results. The triumph of quadratically enriched enumerative geometry is the ability to count solutions to enumerative problems over non-closed fields, see for example \cite{KW-local-degree, 27lines, Hoyois-lefschetz, SW-4-lines, larson-vogt-bitangents, quadratic-tropical, quadratic-twisted-cubics,KLSW-rational-curves}. 
Likewise, one goal of equivariantly enriched enumerative geometry is to use tools from equivariant homotopy theory to count orbits of solutions to enumerative problems given a finite group action on the ambient space in question, see for example \cite{betheapencil, brazeuler, BB-bitangents, BW-G-degree}.

A reasonable starting point for both enriched enumerative theories is constructing the degree and local degree. In motivic homotopy theory, the degree of an endomorphism of $\mathbb{P}^1_k$ was defined by Morel as an element of the Grothendieck--Witt ring $\mathrm{GW}(k)$ \cite[Corollary 4.11]{morel-icm}. The most recent treatment of the quadratically enriched degree is that of Kass--Levine--Solomon--Wickelgren, \cite[2.2]{KLSW-rational-curves}, who define the quadratically enriched degree of a relatively oriented, finite, flat, local complete intersection morphism  $f\colon X\to Y$ of schemes over a field $k$,  as a non-degenerate, symmetric bilinear form on $\calO_Y$, valued in the Grothendieck--Witt sheaf of $Y$, $\mathcal{GW}(Y)$ \cite[2.2]{KLSW-rational-curves}. This generalizes previous work of Kass--Wickelgren, who define the local degree of a polynomial map $f\colon \mathbb{A}^n_k\to \mathbb{A}^n_k$ with an isolated zero at the origin \cite{KW-local-degree}. In equivariant homotopy theory, given a compact Lie group $G$, the degree of a proper $G$-equivariant map $f\colon X\to Y$ of smooth $G$-manifolds of the same dimension which is oriented relative to a genuine equivariant ring spectrum $E_G$ is a cohomology class in $E_G^0(Y)$ \cite{BW-G-degree}. Both the quadratically and equivariantly enriched global degrees can be written as a sum of local degrees, see \cite[Corollary 3.10]{KLSW-rational-curves} and \cite[Theorem 4.22]{BW-G-degree}. These enriched local degrees naturally lend themselves to computing enumerative invariants such as enriched Euler numbers and intersection multiplicities, proving their worth in enriched enumerative geometry. 

This paper seeks to take a step toward bridging the two programs by defining a degree, local degree, and Euler number in equivariant motivic homotopy theory for the purpose of studying equivariant enumerative problems over general fields. We construct these invariants using the six functor formalisms of Hoyois  and Khan-Ravi  \cite{Hoyois6ff, KR-generalized-cohomology}. The main application is a local multiplicity formula for an equivariant motivic B\'{e}zout's theorem. 

We briefly review the construction of our degree in the case of an equivariant map between $G$-schemes here. Let $k$ be a field and $G$ a finite group scheme over $k$ with $|G|$ coprime to $\op{char}(k)$. Let $E_G\in \SH^G(k)$ be an equivariant motivic ring spectrum. Given a quasi-smooth (i.e. lci), proper, $G$-equivariant morphism $f\colon X\to Y$ between $G$-schemes $X$ and $Y$ of the same dimension, the equivariant motivic degree of $f$ is a cohomology class in $E_G^0(Y)$ so long as $f$ is relatively $E_G$-oriented. Specifically, a relative $E_G$-orientation of $f$ determines an equivalence $E_G^0(X) \xrightarrow{\simeq} E_G^0(X, L_f)$, allowing one to define an oriented pushforward 
\begin{center}
    \begin{tikzcd} 
    E_G^0(X, L_f)\arrow["f_!"]{r}  & E_G^0(Y) \\
    E_G^0(X)\arrow[swap, "f^{\mathrm{or}}_! "]{ur} \arrow [, "\simeq"]{u}& 
    \end{tikzcd} 
\end{center}
which is the composition of the equivalence $E_G^0(X)\simeq E_G^0(X, L_f)$ with the proper pushforward $f_!$. The degree is then  defined as $\deg^G f := f^{\mathrm{or}}_!(1_X)\in E_G^0(Y)$, where $1_X$ is the unit in $E_G^0(X)$.

With this setup, given a closed orbit $G \cdot \Spec k(y)$ of a point $\Spec k(y)$ of $Y$ with stabilizer $G_y$, the relative $E_G$-orientation of $f$ pulls back to a relative $E_{G_x}$-orientation at any `point' $x$ in the (derived) fiber $X_y \to \Spec k(y)$ with stabilizer $G_x$. 
Since the derived scheme structure of a point determines the pushforward map in cohomology and therefore its degree, a `point' here refers to a classical point $\Spec k(x)$ of the classical fiber at $\Spec k(y)$ along with this derived structure (see Subsection \ref{subsection:degree_for_schemes} for details).
We can then define the local equivariant degree of any point in the fiber over $\Spec k(y)$ to be an element of $E^0_{G_y}(\Spec k(y))$. Writing $i_y\colon \Spec k(y) \rightarrow Y$ for the inclusion of the point, the first theorem in this work is a local to global degree theorem. 

\begin{theorem} \label{thm:degree_in_intro} Let $E_G \in \SH^G(k)$ be a motivic ring spectrum, and let $f\colon X\to Y$ be a proper, quasi-smooth, equivariantly smoothable $G$-equivariant map between $G$-schemes $X$ and $Y$ of the same dimension. Assume $f$ is relatively $E_G$-oriented.  
 Let $i_y\colon \Spec k(y)\hookrightarrow Y$ be a point of $Y$ whose $G$-orbit is closed in $Y$ and whose fiber $X_y$ is finite, reduced, and zero dimensional. 
Then 
\begin{equation}
i_y^* \deg^{G_y}f = \sum_{\{G_y\cdot x\subseteq X_y\}} \Tr_{G_x}^{G_y} \deg^{G_x}_x f 
\end{equation}
in $E^0_{G_y}(\Spec k(y))$.
\end{theorem}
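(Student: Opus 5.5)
The plan is to reduce to base change plus additivity. The two ingredients are compatibility of the oriented pushforward with base change along the closed immersion $i_y$, and additivity of pushforward over a disjoint union of orbits.

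\textbf{Step 1: restrict the group.} Write $\deg^{G_y} f$ for the restriction of $\deg^G f$ along $G_y\subseteq G$; restriction commutes with $f^{\mathrm{or}}_!$. Because the orbit $G\cdot \Spec k(y)$ is closed, $i_y\colon \Spec k(y)\to Y$ is a $G_y$-equivariant closed immersion.

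\textbf{Step 2: base change.} Form the derived fiber square with $f_y\colon X_y\to \Spec k(y)$ and $i'\colon X_y\to X$. Proper base change in the six functor formalism of Hoyois/Khan--Ravi gives $i_y^* f_! \simeq f_{y!} i'^*$, compatibly with the identification $i'^*L_f\simeq L_{f_y}$. The relative $E_G$-orientation of $f$ pulls back to a relative $E_{G_y}$-orientation of $f_y$, and the Thom isomorphisms are compatible with pullback. Together these give
\[ i_y^*\deg^{G_y} f = f^{\mathrm{or}}_{y!}(1_{X_y}). \]
Equivariant smoothability is used here: it lets one present $f$ as a closed immersion into a smooth $G$-scheme followed by a smooth map. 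That presentation makes the orientation and the Thom class pullback-compatible along the quasi-smooth fiber square.

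\textbf{Step 3: decompose the fiber.} The classical fiber $X_y$ is finite, reduced and zero dimensional. Its derived structure is a disjoint union of derived thickenings of points, and the underlying topological space decomposes as $X_y=\coprod_{G_y\cdot x} G_y\cdot x$. Each orbit is $G_y\times^{G_x} \{x\}$, meaning induced from the stabilizer, where $\{x\}$ carries its derived structure. Pushforward is additive over finite disjoint unions, since $E^0_{G_y}$ of a coproduct is a product and $f_{y!}$ is the sum of the pieces. This gives
\[ f^{\mathrm{or}}_{y!}(1_{X_y})=\sum_{G_y\cdot x} (f_y|_{G_y\cdot x})^{\mathrm{or}}_!(1). \]

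\textbf{Step 4: induction.} Identify each summand with $\Tr_{G_x}^{G_y}\deg^{G_x}_x f$. The key input is the induction equivalence $E^0_{G_y}(G_y\times^{G_x} Z)\simeq E^0_{G_x}(Z)$. Under this equivalence, pushforward along $G_y\times^{G_x}\{x\}\to \Spec k(y)$ factors as the $G_x$-equivariant pushforward $\{x\}\to\Spec k(y)$ followed by pushforward along the finite étale map $G_y/G_x\times\Spec k(y)\to \Spec k(y)$. The latter is, by definition or an earlier comparison, the transfer $\Tr_{G_x}^{G_y}$; tameness ensures $G_y/G_x$ is étale.

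We also need the $G_x$-restricted orientation on $\{x\}$ to be exactly the one defining $\deg^{G_x}_x f$. This holds by construction of the local degree via the pulled-back orientation.

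\textbf{Expected obstacle.} The main difficulty is Step 4, together with the derived bookkeeping of Step 2. The compatibility of the oriented pushforward with the induction equivalence, and the identification of the resulting finite étale pushforward with the transfer, must be checked at the level of Thom spectra, not just on underlying spaces. The first thing to try is to use the factorization of $G_y\times^{G_x}\{x\}\to\Spec k(y)$ through the quotient $[\{x\}/G_x]\to[\Spec k(y)/G_y]$ in the stacky six functor formalism. In that setting the transfer is the pushforward along $BG_x\to BG_y$, and functoriality of $(-)_!$ for composites gives the factorization directly.
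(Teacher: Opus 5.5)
Your proposal is correct and is essentially the paper's proof. The paper base-changes the oriented pushforward along the closed immersion $B_{k(y)}G_y \hookrightarrow [Y/G]$; your restriction to $G_y$ followed by pullback along $\Spec k(y)\to Y$ is exactly this map. It then decomposes $[X_y/G_y]\simeq\bigsqcup_{G_y\cdot x} B_{k(x)}G_x$, applies additivity, and uses functoriality of $(-)_!$ for the factorization $B_{k(x)}G_x\to B_{k(y)}G_x\to B_{k(y)}G_y$, whose second map defines $\Tr_{G_x}^{G_y}$ --- precisely the stacky route you suggest at the end.

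There are two incidental slips, neither of which affects the argument. Equivariant smoothability here means a factorization as a finite unramified map followed by a smooth one, not a closed immersion into a smooth $G$-scheme. Tameness is also not what would make $G_y/G_x$ \'etale, but the stacky factorization needs no such input.
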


See Subsection \ref{subsection:degree_for_schemes} and Theorem \ref{thm:local_global_for_schemes} for detailed constructions of the degree and local degree and the proof of the theorem. 

Unlike the topological equivariant degree and local degree of \cite{BW-G-degree}, Theorem \ref{thm:degree_in_intro} holds over fields other than the real and complex numbers. However, the degrees of Theorem \ref{thm:degree_in_intro} rely on the exceptional pushforward in equivariant motivic homotopy theory, whence cannot be applied to non-algebraic topological maps even over $\mathbb{R}$ and $\mathbb{C}$. We compare the construction of our global degree of a non-equivariant relatively oriented map to that of Kass-Levine-Solomon-Wickelgren, who use a different definition of relative orientation, in Remark \ref{rmk:degree_comparison} when $E$ is the Grothendieck-Witt sheaf and there is no group action. 

In Section \ref{section:bezout} we define the Euler number $n^E(\calV,\sigma)$ of a relatively $E$-oriented vector bundle $\calV$ on a derived stack $\calX$ in the cohomology of a motivic ring spectrum $E$ given a choice of section $\sigma$. In this level of generality, we take the definition of a motivic ring spectrum over $\calS$ to be that in \cite[Definition 9.3]{KR-generalized-cohomology}. 
We show the Euler number is independent of choice of section in Proposition \ref{prop:indep.-of-sec}, which we restate here: 

\begin{theorem}[Section independence]\label{thm:intro-section-indep} Given an $E$-oriented vector bundle $p: \calV \to \calX$ on a quasi-smooth, proper, representably smoothable stack $\pi_\calX \colon \calX \to \calS$ over a scalloped stack $\calS$ such that the relative cotangent complex $L_p$ has constant virtual rank $0$, the Euler number $n^E(\calV,\sigma)$ is independent of the choice of the section $\sigma\colon \calX\to \calV$.
\end{theorem}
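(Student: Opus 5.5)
The plan is to use the standard $\mathbb{A}^1$-homotopy argument, transported to derived stacks through the six functor formalism of \cite{KR-generalized-cohomology}. First we fix the definition of the Euler number. Let $Z(\sigma) := \calX \times_{\sigma,\calV,0} \calX$ be the derived zero locus of $\sigma$, with closed immersion $\iota_\sigma\colon Z(\sigma)\to \calX$. It is quasi-smooth, because it is the base change of the quasi-smooth zero section $0\colon \calX\to \calV$. Its relative cotangent complex is $L_{Z(\sigma)/\calS} \simeq \iota_\sigma^* L_{\calX/\calS} + \iota_\sigma^*\calV^\vee[1]$ in $K$-theory. The $E$-orientation of $\calV$ (relative to $\calX$) therefore gives a Thom isomorphism that untwists $E^0(Z(\sigma), L_{Z(\sigma)/\calS})$ to $E^0(Z(\sigma))$; this is where the rank $0$ hypothesis is used. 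We then set
\[ n^E(\calV,\sigma) = (\pi_\calX\circ \iota_\sigma)^{\mathrm{or}}_!(1) \in E^0(\calS). \]
Equivalently, $n^E(\calV,\sigma) = \pi_{\calX!}^{\mathrm{or}}(e(\calV,\sigma))$, where $e(\calV,\sigma) = \sigma^* 0_!(1)$ is the localized Euler class. Properness of $Z(\sigma)\to\calS$ follows because $\calX$ is proper and $\iota_\sigma$ is a closed immersion. Representable smoothability is what allows the proper pushforward and the orientation to be defined, via \cite{KR-generalized-cohomology}.

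The key step is to show that $e(\calV,\sigma) = 0^*0_!(1)$, i.e. that the Euler class does not depend on the section at all. Consider two sections $\sigma_0,\sigma_1$ and the linear homotopy $\tilde\sigma = (1-t)\sigma_0 + t\sigma_1$. This is a section of the pullback bundle $\tilde\calV$ on $\calX\times\mathbb{A}^1$, and it restricts to $\sigma_j$ along $i_j\colon \calX\times\{j\}\to\calX\times\mathbb{A}^1$. The class $\tilde\sigma^*0_!(1) \in E^0(\calX\times\mathbb{A}^1, \ldots)$ then restricts to $e(\calV,\sigma_j)$ under $i_j^*$, by base change for the derived fiber square (this uses the $!$-base change/purity in the formalism). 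By $\mathbb{A}^1$-invariance of $E$-cohomology on stacks over the scalloped base, proved in \cite{KR-generalized-cohomology}, both $i_0^*$ and $i_1^*$ are inverse to the projection pullback. Hence $i_0^* = i_1^*$, and the two Euler classes agree. Applying $\pi_{\calX!}^{\mathrm{or}}$ gives $n^E(\calV,\sigma_0) = n^E(\calV,\sigma_1)$.

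An alternative that avoids working on $\calX\times\mathbb{A}^1$ is to observe that every section $\sigma$ is $\mathbb{A}^1$-homotopic to the zero section through the homotopy $t\sigma$. Combined with homotopy invariance of the twisted cohomology $E^0(\calX, \calV)$, this shows that $\sigma^* = 0^*$ as maps out of $E_{\calX}^0(\calV)$ supported on the zero section. This is the route I would write up, since it isolates the single homotopy-invariance input.

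The main obstacle is making the compatibility between the Thom/orientation isomorphisms and the base change along $i_j$ precise. Concretely, we must check that the twist $L_{Z(\tilde\sigma)}$ restricts to $L_{Z(\sigma_j)}$ and that the chosen orientation is compatible with this restriction. This holds because the orientation is pulled back from $\calV$ and $\calX$ and does not depend on the section. We must also verify that the derived (not classical) zero loci are what appear in the fiber squares. I would handle this with the functoriality of the Gysin maps for quasi-smooth morphisms in \cite{KR-generalized-cohomology}, namely their compatibility with base change and with composition, together with the fact that $L_p$ has constant rank $0$, which makes the total twist on $Z(\sigma)$ trivialized uniformly in the homotopy parameter.
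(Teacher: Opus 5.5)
Your proposal is correct and is essentially the paper's argument. The only real input is that $\sigma^* = 0^*$ on $E^0(\calV, p^*\calE)$ by homotopy invariance, so $\sigma^*0_!(1_\calX) = e^E(\calV)$ for every $\sigma$, and applying the section-independent map $\pi_{\calX,!}\rho_\calE$ finishes. The paper gets $\sigma^*=0^*$ a bit more directly: both are left inverses of the isomorphism $p^*$, since $p\sigma = p0 = \mathrm{id}$. Also, with the paper's definition $n^E(\calV,\sigma)=\pi_{\calX,!}\rho_\calE(\sigma^*0_!(1_\calX))$, your concerns about transporting the orientation and the cotangent complex of $Z(\sigma)$ along the homotopy never arise, because the orientation is applied on $\calX$ after pulling back.
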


When $X$ is a scheme and $p\colon V \to X$ is a vector bundle on $X$, the condition that $L_p$ has constant virtual rank 0 should be interpreted as imposing the condition that $\op{rank} V = \dim X$. 

A key departure from \cite{BW-G-degree} and \cite{brazeuler} is that the computation of $n^E(\calV,\sigma)$ using local degrees requires the section to have isolated zeros, but not necessarily  isolated zeros that are also simple. We expand on the computation of the Euler number as a sum of local degrees in the case when $\calX$ is the quotient of a scheme by a finite group $G$. Let $V := \mathbb{V}(\calE) \to X$ be a $G$-equivariant vector bundle over a proper, smooth $G$-scheme $X$ over $S = \Spec(k)$, with $\calE$ a $G$-equivariant locally free sheaf on $X$.
Let $\sigma$ be a $G$-invariant section. We first state the main result when $\sigma$ has zeros that are both isolated and simple: 

\begin{theorem} \label{intro_thm:Euler-number=local_deg}
 Let $G$ be a finite group which is tame over $k$. Let $X$ be a proper, smooth algebraic space over $S= \Spec(k)$ 
    with $G$-action and $p: V \to X$ be an $E_G$-oriented equivariant vector bundle on $X$ such that $L_p$ has constant virtual rank 0. Suppose $p$ has an equivariant section $\sigma$ with isolated and simple zeros $\coprod Gx_i$. 
    
    Let $E_G \in \SH^G(S)$ be any motivic ring spectrum. If $\sigma$ is compatibly $E_G$-oriented, then the equivariant Euler number with respect to $\sigma$ can be computed as the sum of local indices:
    \[ 
    n^{E_G}(V, \sigma) = \sum_{\{G\cdot x_i \subseteq Z(\sigma)\}} \Tr^G_{G_{x_i}} \Tr_{k(x_i)/k} (1_{G_{x_i}})
    \]
   in $E_G^0(S)$, where $G_{x_i} \subseteq G$ denotes the stabilizer group of any $x_i \in Z(\sigma)$, the zero locus of $\sigma$.
  By independence of section (Proposition~\ref{prop:indep.-of-sec}), this computes $n^{E_G}(V)$.
\end{theorem}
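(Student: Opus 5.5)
We reduce the Euler number to a degree computation and then apply the local-to-global formula of Theorem~\ref{thm:degree_in_intro}. By definition, $n^{E_G}(V,\sigma)$ is the oriented pushforward of the unit along the derived zero locus. Concretely, form the derived fiber product $Z(\sigma) := X \times_{\sigma, V, 0} X$. Since $X$ is smooth, $Z(\sigma)\to S$ is quasi-smooth and proper, of virtual dimension $\dim X - \op{rank}V = 0$; the rank-$0$ condition on $L_p$ gives exactly this. The compatible $E_G$-orientation of $\sigma$ orients $\pi\colon Z(\sigma)\to S$ relative to $E_G$. Then
\[
n^{E_G}(V,\sigma) = \pi^{\mathrm{or}}_!(1_{Z(\sigma)}) = \deg^G \pi \in E_G^0(S).
\]
So the Euler number is the global equivariant degree of $\pi$, with $Y = S = \Spec k$. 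The single point of $S$ is trivially a closed orbit, with stabilizer $G$.

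The next step is to identify the fiber. Because the zeros are isolated and simple, the derived structure on $Z(\sigma)$ is trivial. Simplicity means the differential of $\sigma$ at each zero is an isomorphism $T_xX \to V_x$, so the Koszul complex has no higher homology and $Z(\sigma)$ is classical. Isolatedness then makes it the reduced, zero-dimensional scheme $\coprod_i G\cdot \Spec k(x_i)$. Thus the hypotheses of Theorem~\ref{thm:degree_in_intro} hold: the fiber over the unique point is finite, reduced and zero dimensional. One also needs $\pi$ to be equivariantly smoothable. I would get this by factoring $Z(\sigma)\hookrightarrow X \to S$, with $X$ smooth and proper; if $X$ is only an algebraic space, I would use whatever smoothability notion the paper sets up for representable maps. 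Applying the theorem gives
\[
n^{E_G}(V,\sigma) = \sum_{\{G\cdot x_i\}} \Tr^G_{G_{x_i}} \deg^{G_{x_i}}_{x_i}\pi .
\]

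It remains to compute each local degree: $\deg^{G_{x_i}}_{x_i}\pi = \Tr_{k(x_i)/k}(1_{G_{x_i}})$. Since the point is reduced, the local degree is the oriented pushforward of $1$ along the finite étale, $G_{x_i}$-equivariant map $\Spec k(x_i)\to \Spec k$. For a finite étale map, $L_\pi\simeq 0$, so there is a canonical orientation, and the proper pushforward coincides with the étale transfer; this transfer is by definition $\Tr_{k(x_i)/k}$.

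I expect the main obstacle to be checking that the orientation induced on $\Spec k(x_i)$ by the compatible orientation of $\sigma$ agrees with this canonical trivial orientation. If it differed by a unit, the answer would carry an extra twist, as the usual Jacobian sign does in quadratic enumerative geometry. The plan is to use the word \emph{compatibly}, which I expect means precisely that the isomorphism $d\sigma\colon T_X|_{Z}\simeq V|_Z$ is sent to the identity under the orientation $\det T_X \simeq \det V$. Tracing this through the equivalence $E^0_G(Z)\simeq E^0_G(Z,L_\pi)$ should show that the twist is trivial. Granting that, the formula follows. The final sentence of the statement then follows from Proposition~\ref{prop:indep.-of-sec}.
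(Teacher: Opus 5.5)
Your proposal is correct and is essentially the paper's argument. The paper first applies base change to the cartesian square of $\sigma$ against the zero section, which is Lemma~\ref{lem:equiv-defns-of-euler-number} and is what you are actually using when you say ``by definition''; it then factors $B_{k(x_i)}G_{x_i}\to BG$ through $BG_{x_i}$ and identifies the first map as a base change of $\Spec k(x_i)\to\Spec k$, which is exactly the content of Theorem~\ref{thm:local_global_for_schemes} with $Y=\Spec k$ that you cite. The orientation issue you flag as the main obstacle is settled by Definition~\ref{defn:compatibly_oriented_zeros}: ``compatibly $E_G$-oriented'' is defined to mean precisely that the induced orientation on each component of $Z(\sigma)$ is the identity.
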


In other words, the equivariant Euler number can be computed as a sum over orbits of zeros of a section. See Proposition \ref{prop:Euler-number=local_deg} for the proof. 

When $\sigma$ is a section with zeros that are isolated but not necessarily simple,  
\[
n^{E_G}(V,\sigma) = \underset{\{G\cdot x_i\subseteq Z(\sigma)\}}{\sum} \Tr_{G_x}^G \Tr_{k(x)/k)} \deg_x^{G_x}(f)
\]
where $f$ is constructed as follows. Suppose there exists a finite dimensional $G$-invariant $k$-vector subspace $U$ of $\Gamma(X,V)$ containing $\sigma$, which may be automatic if $\Gamma(X,V)$ is finite dimensional. We also let $U$  denote the $G$-equivariant affine $k$-space defined by $U$. Let $Z$ be the derived pullback in the following homotopy cartesian square:
\begin{equation} 
  \begin{tikzcd}
        Z \arrow{r} \arrow{d}{i_Z} & X \arrow{d}{0}  \\
        U \times X \arrow{r}{ev} & V,
    \end{tikzcd}  
\end{equation}
where the bottom horizontal map is the evaluation map and the right vertical map is the zero section. We then define $f: Z \to U$ to be the map induced by the left vertical map above composed with the projection $p_U: U \times X \to U$. The  description of $f$ is given in full, along with orientation compatibility checks, in Construction \ref{construction:map_f_for_euler}.

The Euler number allows us to define the degree of an intersection of closed substacks that is not necessarily transverse and compute it as a sum of local degrees. See related work of Khan for the construction of the virtual fundamental class of a derived, quasi-smooth intersection in general. In particular, if $\calZ_1$ and $\calZ_2$ are given by the derived vanishing loci of sections $s_1, s_2$ of  $E$-oriented vector bundles $\calV_1$ and $\calV_2$ over a proper, quasi-smooth, representably smoothable derived stack $\calX$ over a scalloped stack $\calS$, then the derived intersection $\calZ$ is the derived vanishing locus of the section $(s_1, s_2)$ of $\calV_1\oplus \calV_2$. The Euler number 
\[
n^E(\mathbb{V}(\calV_1\oplus\calV_2))
\]
defines the $E$-intersection number of $\calZ_1$ and $\calZ_2$ in $\calX$. Since the Euler number is defined as a degree, see Definition \ref{defn:euler_number}, the $E$-intersection of $\calZ_1$ and $\calZ_2$ can be computed as a sum of local degrees. This extrapolates to the intersection of $\calZ_1, \ldots, \calZ_n$ given by the derived vanishing loci of sections $s_1, \ldots, s_n$ of $E$-oriented vector bundles $\calV_1, \ldots, \calV_n$. 

The culmination of computing 
an intersection number as an Euler number using degrees is an application to an enriched non-transverse B\'{e}zout's theorem. Enriched versions of B\'{e}zout's theorem exist, in the quadratically enriched case due to McKean \cite{mckean-bezout} and in the topological $C_2$-equivariant case due to Costenoble-Hudson-Tilson and Costenoble-Hudson \cite{CH-bezout, CH-bezout}. Our enriched B\'{e}zout's theorem allows us to compute the degree of an intersection defined by the simultaneous derived vanishing loci of sections. Like all results of this paper, our B\'ezout's theorem is purely algebro-geometric. We obtain an equivariantly enriched local multiplicity formula for B\'{e}zout's theorem for intersections that are isolated but not necessarily simple. 

\begin{theorem}[Equivariant non-transverse B\'ezout's theorem] Let $k$ be a field, $G$ be a finite group that is tame over $S=\Spec(k)$, and  $E_G \in \SH^G(S)$ be an equivariant motivic ring spectrum. Let $X$ be a proper, smooth algebraic space with $G$-action over $S$. 
   Let $\calE_1, \ldots, \calE_n$ be $E_G$-oriented $G$-equivariant vector bundles on $X$.  Assume $\op{rank} \mathbb{V}(\oplus_{i=1}^n\calE_i) = \dim X$. Let $s_1, \ldots s_n$ be $G$-equivariant sections of $\calE_1, \ldots, \calE_n$ respectively.  Let $Z_1, \ldots, Z_n$ denote the derived vanishing loci of the sections $s_1, \ldots, s_n$ respectively. 
   
   The $E_G$-intersection number of $Z_1, \ldots, Z_n$ is given by $n^{E_G}(\mathbb{V}(\oplus_{i=1}^n\calE_i))$, and is independent of choice of section. If $\sigma$ is a section of $\mathbb{V}(\oplus_{i=1}^n\calE_i)$  with isolated, but not necessarily simple, zeros $Z(\sigma)=\{x_1, \cdots, x_k\}$, then the $E_G$-intersection number can be computed as
   \[
   \sum_{\{G\cdot x_i\subseteq Z(\sigma)\}} \Tr^G_{G_{x_i}} \deg_{x_i}^{G_{x_i}}f
   \]
   in $E^0_G(S)$, where $f$ is as in Construction \ref{construction:map_f_for_euler} and $G_{x}$ denotes the $G$-stabilizer at a point $x \in X$. When $Z(\sigma)$ consists of zeros that are both isolated and simple and $\sigma$ is compatibly $E_G$-oriented, the $E_G$-intersection number can be computed as
   \[
   \sum_{\{G\cdot x_i\subseteq Z(\sigma)\}} \Tr^G_{G_{x_i}} \Tr_{k(x_i)/k} (1_{G_{x_i}})
   \]
   in $E^0_G(S)$.  
   \end{theorem}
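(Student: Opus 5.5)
This theorem is essentially an assembly of results already in the paper, so the plan is to reduce each claim to one of them applied to the single bundle $V := \mathbb{V}(\oplus_{i=1}^n \calE_i)$ over $X$, viewed as an equivariant bundle on the quotient stack $[X/G]$ over $\calS = [S/G] = BG$.

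\emph{Step 1: set-up and orientation.} Since direct sums of $E_G$-oriented bundles are $E_G$-oriented (the determinant of $\oplus_i \calE_i$ is the tensor product of the determinants, so the orientations tensor), $V$ is $E_G$-oriented. The rank hypothesis $\op{rank} V = \dim X$, together with smoothness of $X$, gives that $L_p$ has constant virtual rank $0$, as explained after Theorem \ref{thm:intro-section-indep}. Next I check the hypotheses on $\calX = [X/G] \to BG$: it is proper because $X$ is proper; it is quasi-smooth (indeed smooth) because $X$ is smooth; and it is representably smoothable trivially, being smooth. Finally, $BG$ is scalloped because $G$ is finite and tame.

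\emph{Step 2: identification and section independence.} The derived vanishing locus of $(s_1,\dots,s_n)$ is the derived intersection $Z_1 \times_X \cdots \times_X Z_n$, because derived zero loci of sections of a direct sum are fibre products of the individual zero loci. By the definition given before the theorem, the $E_G$-intersection number is therefore $n^{E_G}(V)$. Its independence of the chosen section is exactly Proposition \ref{prop:indep.-of-sec}, applied with $\calX = [X/G]$ and $\calS = BG$.

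\emph{Step 3: local formulas.} For a section $\sigma$ with isolated zeros, I will invoke the non-simple local formula stated after Theorem \ref{intro_thm:Euler-number=local_deg}, using the map $f\colon Z \to U$ of Construction \ref{construction:map_f_for_euler}. This applies once a finite-dimensional $G$-invariant subspace $U \ni \sigma$ of $\Gamma(X,V)$ exists, and properness of $X$ makes $\Gamma(X,V)$ finite dimensional, so one may take $U = \Gamma(X,V)$. That result expresses $n^{E_G}(V,\sigma) = \deg^G f$ evaluated at the $G$-fixed rational point $\sigma \in U$. Theorem \ref{thm:degree_in_intro} then rewrites this as a sum of transfers of local degrees over the $G$-orbits of the fibre $f^{-1}(\sigma) \cong Z(\sigma)$. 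The field traces $\Tr_{k(x_i)/k}$ should be absorbed into the notation $\deg_{x_i}^{G_{x_i}} f$, or else stated explicitly; I would match the displayed formula to the convention used there. In the simple, compatibly oriented case, the formula is Theorem \ref{intro_thm:Euler-number=local_deg} (Proposition \ref{prop:Euler-number=local_deg}) verbatim.

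\emph{Main obstacle.} The hardest step is verifying the hypotheses of Theorem \ref{thm:degree_in_intro} for $f$ at the point $\sigma$. First, $f$ must be proper, quasi-smooth and equivariantly smoothable: $Z$ is a derived zero locus in the smooth space $U \times X$, and $f$ is proper since $X$ is. Second, $\{\sigma\}$ must be a closed orbit, which holds because $\sigma$ is a $G$-fixed rational point. Third, the classical fibre must be finite and zero-dimensional, which follows from the zeros being isolated. The theorem as stated also asks for a reduced fibre, which fails for non-simple zeros. I expect the paper's Construction \ref{construction:map_f_for_euler} to handle this by working with the derived point structure, as the discussion of `points' in Subsection \ref{subsection:degree_for_schemes} suggests. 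I would therefore check that the local-to-global argument only needs the fibre to be a finite disjoint union of connected components supported at the orbits.
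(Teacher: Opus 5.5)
Your proposal is correct and is the paper's proof with the citations unpacked. The paper invokes \eqref{eq:int-no.=Euler-no.} (inducting on $n$, where you form the $n$-fold derived fibre product directly), Proposition~\ref{prop:indep.-of-sec}, Theorem~\ref{thm:non_transverse_G_euler=local_degrees} for isolated zeros (whose proof is exactly your oriented base change at the $G$-fixed $k$-point $\sigma$ followed by Theorem~\ref{thm:local_global_for_schemes}, so $\deg^{G_x}_x f$ already lies in $E^0_{G_x}(\Spec k)$ and no separate $\Tr_{k(x)/k}$ is needed), and Proposition~\ref{prop:Euler-number=local_deg} for simple zeros; your remark that $\Gamma(X,V)$ is finite dimensional supplies the subspace hypothesis the paper leaves implicit. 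Your reducedness worry is legitimate, since the paper applies Theorem~\ref{thm:local_global_for_schemes} to non-reduced fibres without comment, and your fix (only the splitting of the finite fibre into $G$-orbits of connected components is used) is the right one; the one slip is the determinant argument for orienting $\oplus_i\calE_i$, which does not fit Definition~\ref{defn:rel_oriented_bdle}, because tensoring equivalences $E_X\otimes\Th_X(\calE_i)\simeq E_X\otimes\Th_X(L_X)$ produces $\Th_X(nL_X)$ rather than $\Th_X(L_X)$, so an orientation of the sum must simply be assumed, as the paper tacitly does.
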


This should be interpreted as a local multiplicity formula in the case of $G$-schemes for equivariant intersections that are not necessarily transverse. The intersection number defined here is the degree of the cohomological fundamental class of Khan's non-transverse B\'ezout theorem for $G$-schemes when both apply, see \cite[Theorem 3.22]{khan-VFC}. We note that Khan's non-transverse B\'ezout theorem for intersection classes works in more general settings than ours. 

\noindent{\bf Conventions.}
Assume that all (derived) stacks are quasi-compact and quasi-separated. 
All stacks are homotopically smooth in the sense that the cotangent complex is a perfect complex over the base. 

We define motivic ring spectrum $E$ over the base (scalloped) stack $\calS$ as in \cite[Section 9]{KR-generalized-cohomology}. Specifically, a \textit{motivic ring spectrum} $E$ over $\calS$ is a collection of objects $E_\calX \in \SH(\calX)$ for every scalloped derived stack $\calX$ representable over $\calS$, together with a homotopy coherent system of isomorphisms $$f^*E_\calY\simeq E_\calX$$ in $\SH(\calX)$ for every representable morphism $f\colon \calX\to \calY$. Given a motivic ring spectrum $E$ over $\calS$, we define $E$-cohomology in Definition \ref{defn:cohomology}. Examples include homotopy invariant $K$-theory $\KGL$, motivic cobordism $\MGL$, and motivic cohomology. 

Given a stack $\calX$ representable over a base $\calS$, $L_\calX$ is always meant to denote the cotangent complex of $\calX$ over $\calS$, cohomologically graded in $(-\infty, 0]$.
Given a locally free sheaf $\calE$ on $\calX$,  $\mathbb{V}(\calE) := \Spec \op{Sym} \calE$.  The \emph{Thom spectrum} of $\calE$ in $\SH(\calX)$ is defined here to be the (stabilization or) infinite suspension spectrum of $\mathbb{V}(\calE)/(\mathbb{V}(\calE) \setminus \calX)$. The Thom spectrum is typically denoted $\Sigma^{\infty}\Th_\calX(\calE)$ to distinguish between Thom spectra and their unstable counterparts, but we reserve $\Th_\calX(\calE)$ for the spectrum since we only work stably in this paper. This definition extends to perfect complexes on a scalloped derived stack $\calX$ by \cite[Theorem 4.5.i]{KR-generalized-cohomology}, which says $\Th_\calX(\calE) \in \SH(\calX)$ is $\otimes$-invertible, and the assignment $\calE\mapsto \Th_\calX(\calE)$ induces a canonical map 
\[
\Th_\calX\colon K(\calX)\to \op{Pic}(\SH(\calX))
\]
from the algebraic $K$-theory of perfect complexes on $\calX$ to $\otimes$-invertible objects in $\SH(\calX)$. See also \cite{Bachmann2017NormsIM} for more on Thom spectra. 

Finally, for any finite locally free sheaf $\calE \to \calX$ there is an endofunctor $$\langle \calE\rangle\colon \SH(\calS)\to \SH(\calS),$$ called the \emph{Thom twist} by $\calE$, which is characterized by the equivalence $\langle\calE\rangle \simeq (-)\otimes \Th_{\calX}(\calE)$. See \cite[Definition 5.4, Example 5.12.iv]{KR-generalized-cohomology}. For $\calE\in K_0(\calX)$, we may denote $(-)\otimes\Th_\calX(\calE)$ by $\Sigma_\calX^\calE$ when it is notationally convenient to do so. 

Finally, we note that all pullbacks are derived unless otherwise noted.

\noindent{\bf Acknowledgements.} Candace Bethea was supported by National Science Foundation award DMS-2402099. She would like to thank the Tata Institute of Fundamental Research for providing a productive working environment while the majority of this work was being conducted. She would also like to thank Kirsten Wickelgren for several helpful discussions related to this work. Charanya Ravi thanks Adeel Khan for numerous discussions on equivariant six-functor formalisms and for his collaboration on the paper \cite{KR-generalized-cohomology}, which provides the framework required for this work. She acknowledges the support provided by ANRF PM Early Career Research Grant ANRF/ECRG/2024/001733/PMS and the Department of Atomic Energy, Government of India, under Project No. RTI 4014.

\section{Degree and Local Degree}\label{section:degree}

In this section, we present the degree and local degree for quasi-smooth, proper, representably smoothable equivariant morphisms between $G$-schemes over a field $k$ of the same dimension for a finite group $G$ of order coprime to the exponential characteristic of $k$, $\op{char}(k)$. This relies on a general base change construction that applies to a large class of stacks called \emph{scalloped stacks} (Definition \ref{defn:scalloped}) due to Khan-Ravi \cite{KR-generalized-cohomology}. The reader new to scalloped derived stacks should think of these as being stratified by quotient stacks that are the quotient of a derived quasi-affine scheme by a finite \'{e}tale extension of a multiplicative type group scheme. This includes classical Deligne-Mumford or Artin stacks with separated diagonal which are tame \cite[Examples 2.15-2.16]{KR-generalized-cohomology}, where tame is meant in the sense of Abramovich-Olsson-Vistoli \cite{AOV-tame-stacks}.

In Subsection \ref{subsection:degree_for_scalloped}, we review the general construction, and in Subsection \ref{subsection:degree_for_schemes} we present the degree and local degree for $G$-schemes. This is an algebraic equivariant analogue of the topological equivariant degree and local degree of Bethea-Wickelgren \cite[Sections 3-4]{BW-G-degree}, defined using genuine stable equivariant homotopy theory. 

\subsection{General base change}\label{subsection:degree_for_scalloped}

We recount first the relevant definitions of scalloped stacks and generalized cohomology from \cite{KR-generalized-cohomology}.

\begin{definition}\label{defn:nice_embeddable}
    \cite[Definition 2.1]{KR-generalized-cohomology} Let $G$ be an fppf affine group scheme over $S$ an affine scheme. 
    \begin{enumerate}
        \item We say $G$ is \emph{nice} if it is an extension of a finite \'{e}tale group scheme, of order prime to the residue characteristics of $S$, by a group scheme of multiplicative type. 
        \item We say $G$ is \emph{embeddable} if it is a closed subgroup of $GL_S(\mathcal{E})$ for some locally free sheaf $\mathcal{E}$ on $S$. 
    \end{enumerate}
\end{definition}

\begin{definition}\label{defn:scallop_decomposition}
    \cite[Definition 2.7]{KR-generalized-cohomology} Let $\mathcal{X}$ be a derived algebraic stack. A \emph{scallop decomposition} $(\mathcal{U}_i, \mathcal{V}_i, u_i)$ of $\mathcal{X}$ is a finite filtration by quasi-compact open substacks 
    \[
    \varnothing = \mathcal{U}_0 \hookrightarrow \mathcal{U}_1 \hookrightarrow \cdots\hookrightarrow \mathcal{U}_n = \mathcal{X},
    \]
    together with Nisnevich squares 
    \[
    \begin{tikzcd}
        \mathcal{W}_i \arrow{r} \arrow{d} & \mathcal{V}_i \arrow["u_i"]{d} \\
        \mathcal{U}_{i-1} \arrow{r} & \mathcal{U}_i
    \end{tikzcd}
    \]
    where $u_i$ are representable \'{e}tale morphisms of finite presentation. 
\end{definition}

\begin{definition}\label{defn:scalloped}
\cite[Definition 2.9]{KR-generalized-cohomology} Let $\mathcal{X}$ be a quasi-compact and quasi-separated derived algebraic stack. 
\begin{enumerate}
    \item $\mathcal{X}$ is \emph{nicely fundamental} if it admits an affine morphism $\mathcal{X}\to BG$ for some nice embeddable group scheme $G$ over an affine scheme $S$. That is, $\mathcal{X}$ is the quotient $[X/G]$ of an affine derived scheme with a $G$-action. 
    \item $\mathcal{X}$ is \emph{nicely quasi-fundamental} if it admits a \emph{quasi-affine} morphism $\mathcal{X}\to BG$ for some nice embeddable group scheme $G$ over an affine scheme $S$. That is, $\mathcal{X}$ is the quotient $[X/G]$ of a quasi-affine derived scheme $X$ over $S$ with a $G$-action. 
    \item $\mathcal{X}$ is \emph{nicely scalloped} if it has separated diagonal and admits a scallop decomposition $(\mathcal{U}_i, \mathcal{V}_i, u_i)$ where $\mathcal{V}_i$ are quasi-fundamental. 
\end{enumerate}
\end{definition}

The adjective `nicely' is dropped henceforth. Scalloped stacks have a description in terms of stabilizers: 

\begin{theorem}\label{thm:scalloped_nice_stabilizers}
    \cite[Theorem 2.12]{KR-generalized-cohomology} 
    \begin{enumerate}
    \item The classes of fundamental, quasi-fundamental, and scalloped derived stacks are stable under finite disjoint unions. 
    \item Let $\mathcal{X}$ be a qcqs derived algebraic stack. If $\mathcal{X}$ has separated diagonal, then the following are equivalent: \label{thm:sep_diagonal_scalloped}
    \begin{enumerate}
        \item $\mathcal{X}$ admits a Nisnevich cover $u\colon \mathcal{U}\twoheadrightarrow  \mathcal{X}$ where $\mathcal{U}$ is fundamental.
        \item $\mathcal{X}$ is scalloped.
        \item $\mathcal{X}$ admits a scallop decomposition $(\mathcal{U}_i, \mathcal{V}_i, u_i)_i$ where the $\mathcal{V}_i$ are scalloped.
        \item $\mathcal{X}$ has nice stabilizers. 
    \end{enumerate}
    \item Let $\mathcal{X}$ be a qcqs derived algebraic stack. If $\mathcal{X}$ has affine diagonal, then the following are equivalent: 
    \begin{enumerate}
        \item $\mathcal{X}$ is scalloped. 
        \item $\mathcal{X}$ admits an affine Nisnevich cover $u\colon \mathcal{U}\twoheadrightarrow  \mathcal{X}$ where $\mathcal{U}$ is fundamental.
        \item $\mathcal{X}$ admits a scallop decomposition $(\mathcal{U}_i, \mathcal{V}_i, u_i)_i$ where the $\mathcal{V}_i$ are quasi-fundamental and the $u_i$ are quasi-affine.
    \end{enumerate}
    \end{enumerate}
\end{theorem}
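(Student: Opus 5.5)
The statement is quoted from \cite[Theorem 2.12]{KR-generalized-cohomology}, and I would reprove it in the order stated. For part (1), the point is that a disjoint union of quotients by different groups can be rewritten as a quotient by a single group. Given $[X/G]$ and $[X'/G']$, set $H = G\times_S G'$. Then
\[
[X/G] \simeq [(X\times_S G')/H],
\]
and similarly for $X'$. Hence $[X/G]\sqcup[X'/G'] \simeq [(X\times G' \sqcup X'\times G)/H]$.

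It remains to check three things about this rewriting:
\begin{itemize}
\item The morphism to $BH$ stays affine, resp.\ quasi-affine, because $G,G'$ are affine.
\item $H$ is nice, since the extension structure of $G$ and $G'$ is compatible with products, and an order prime to the residue characteristics is preserved.
\item $H$ is embeddable, via the block-diagonal embedding $H\hookrightarrow GL(\mathcal{E}\oplus\mathcal{E}')$.
\end{itemize}
For scalloped stacks, I would concatenate the two scallop decompositions, taking disjoint unions of the opens stage by stage.

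For part (2) I would prove the cycle (a)$\Rightarrow$(b)$\Rightarrow$(c)$\Rightarrow$(d)$\Rightarrow$(a).
\begin{itemize}
\item (a)$\Rightarrow$(b): a Nisnevich cover $u\colon\mathcal{U}\to\mathcal{X}$ of a qcqs stack admits a finite splitting sequence by quasi-compact opens $\mathcal{U}_i$. Over each stratum, $u$ has a section, and this produces Nisnevich squares whose $\mathcal{V}_i$ are quasi-compact opens of $\mathcal{U}$. A quasi-compact open of a fundamental stack $[X/G]$ is $[X^\circ/G]$ with $X^\circ$ quasi-affine, so it is quasi-fundamental.
\item (b)$\Rightarrow$(c): trivial, since quasi-fundamental stacks are scalloped (take the one-step decomposition).
\item (c)$\Rightarrow$(d): I would argue by induction on the length of the decomposition. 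Each point of $\mathcal{X}$ lies in some stratum $\mathcal{U}_i\smallsetminus\mathcal{U}_{i-1}$, over which $u_i$ is an isomorphism, so the stabilizer is computed on $\mathcal{V}_i$. Inductively this reduces to a quasi-fundamental stack $[X/G]$, whose stabilizers are closed subgroups of fibres of $G$. Closed subgroups of nice groups are nice: a subgroup of a multiplicative-type group is multiplicative, and the image in the finite étale quotient has order still prime to the residue characteristics.
\end{itemize}

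The substantive step is (d)$\Rightarrow$(a), and it is where I expect the main obstacle. I would invoke the local structure theorem of Alper--Hall--Rydh: at each point $x$ of a qcqs stack with separated diagonal and linearly reductive stabilizer $G_x$, there is a representable étale map $[\Spec A/G_x]\to\mathcal{X}$ inducing an isomorphism of residual gerbes at $x$, i.e.\ a Nisnevich neighbourhood. Nice stabilizers are linearly reductive, which is why the prime-to-characteristic hypothesis matters.

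The delicate points are twofold. First, over a non-closed residue field, $G_x$ must be spread out to a nice embeddable group over an affine base. Second, the neighbourhood must be Nisnevich (residual gerbes isomorphic) rather than merely étale. For embeddability I would use that a nice group over an affine base embeds into some $GL(\mathcal{E})$ after passing to a Zariski/Nisnevich localization of the base. Once these neighbourhoods exist, quasi-compactness gives finitely many of them covering $\mathcal{X}$ in the Nisnevich sense, and part (1) shows that their disjoint union is fundamental.

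Part (3) follows the same cycle with two changes. When the diagonal is affine, the Alper--Hall--Rydh neighbourhoods can be chosen with affine morphism to $\mathcal{X}$, giving an affine Nisnevich cover. In the splitting-sequence argument, the $u_i$ are restrictions of this affine map to quasi-compact opens, hence quasi-affine. Conversely, (c)$\Rightarrow$(a) follows as in part (2), because affine diagonal implies separated diagonal.
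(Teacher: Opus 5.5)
The paper gives no proof of this statement; it is quoted from \cite[Theorem 2.12]{KR-generalized-cohomology}. Your overall skeleton is the standard one: splitting sequences of a Nisnevich cover give scallop decompositions, stabilizers are read off stratum by stratum, and (d)$\Rightarrow$(a) comes from Alper--Hall--Rydh. However, two steps do not work as written.

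The first is part (1). Forming $H=G\times_S G'$ presupposes that $G$ and $G'$ live over the same affine scheme, but in the definition of (quasi-)fundamental stacks the affine base $S$ is part of the existentially quantified data. This matters because (d)$\Rightarrow$(a) invokes (1) in exactly this case. Your finitely many neighbourhoods $[\Spec A_\alpha/G_\alpha]$ carry groups spread out over different affine schemes $S_\alpha$. There is then no fibre product to form, since a stack over $S_\alpha$ need not map to $S_\beta$. The fix is to use disjoint unions instead of products:
\begin{itemize}
\item $\bigsqcup_\alpha G_\alpha$ is a nice group over the affine scheme $\bigsqcup_\alpha S_\alpha$, because niceness is local on the base;
\item it is a closed subgroup of $\mathrm{GL}(\mathcal{E})$, where $\mathcal{E}$ restricts to $\mathcal{E}_\alpha$ on $S_\alpha$;
\item $\bigsqcup_\alpha[\Spec A_\alpha/G_\alpha]\to\bigsqcup_\alpha B_{S_\alpha}G_\alpha$ is affine, resp.\ quasi-affine.
\end{itemize}

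The second and more serious problem is the input to (d)$\Rightarrow$(a). The local structure theorem for linearly reductive stabilizers does not give what you state. In the generality you need, it produces neighbourhoods that are quotients by $\mathrm{GL}_n$ (linearly fundamental, not nicely fundamental). It also comes with finiteness hypotheses on $\mathcal{X}$ and on the point. So it yields neither a nicely fundamental source nor neighbourhoods at every point of $|\mathcal{X}|$, which a Nisnevich cover requires.

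Linear reductivity cannot suffice in any case. $B\mathrm{GL}_2$ over $\mathbb{Q}$ has linearly reductive stabilizers, but by the implications (a)$\Rightarrow$(d) you prove yourself, it admits no Nisnevich cover by a nicely fundamental stack. Your two ``delicate points'' are precisely the content of the missing input. Your suggested remedy, Nisnevich-local embeddability of a nice group, presupposes a group acting on an \'etale neighbourhood rather than constructing one.

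What closes the gap is the Alper--Hall--Rydh local structure result for points with \emph{nice} stabilizer. It uses the rigidity of multiplicative-type groups and of finite \'etale groups of invertible order to spread out $G_x$. At every point it gives a neighbourhood $[\Spec A/G]\to\mathcal{X}$, with $G$ nice and embeddable over an affine scheme, inducing an isomorphism of residual gerbes. This map can be chosen representable (resp.\ affine) when the diagonal is separated (resp.\ affine), which is what parts (2) and (3) need. Cite that result directly.

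Two smaller points remain:
\begin{itemize}
\item The statement concerns derived stacks, while these local structure theorems are classical. You must say how you pass from $\mathcal{X}_{\mathrm{cl}}$ to $\mathcal{X}$: \'etale and Nisnevich covers lift uniquely, but you still have to check that the lifted source is fundamental.
\item Extracting finitely many neighbourhoods uses quasi-compactness of $|\mathcal{X}|$ in the constructible topology, not ordinary quasi-compactness. The locus over which a given Nisnevich neighbourhood splits contains a locally closed neighbourhood of the point but is not open in general.
\end{itemize}
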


\begin{remark}
    Any derived stack which is representable over a scalloped stack is scalloped. 
\end{remark}

\begin{assumption}
    Throughout, we will always assume that any scalloped stack has affine diagonal to allow for the construction of Gysin transformations. 
\end{assumption}

 We will work parameterized over a base throughout this paper. Let $\calS$ be a scalloped derived stack, and let $\pi_\calX \colon \calX\to \calS$ and $\pi_\calY \colon \calY\to\calS$ be derived scalloped stacks representable over $\calS$. 
 Let $E$ be a motivic ring spectrum over $\calS$.  Twisted cohomology valued in $E$ is defined as follows, following \cite[Section 9]{KR-generalized-cohomology} and \cite[Section 2.2]{DJK} in the case of schemes: 

\begin{definition}\label{defn:cohomology} (See \cite[Definition~2.2.1]{DJK}, \cite[Section~9.1]{KR-generalized-cohomology})
    Suppose $\pi_\calX\colon \calX \to \calS$ is representable.
    Let $V$ be a class in $K_0(\calX)$. The \emph{$V$-twisted $E$-cohomology of $\calX$} is the mapping spectrum
    \[
    E(\calX, V):=\op{Maps}_{\SH(\calX)}(\unit_\calX, \pi_\calX^*E\otimes\Th_\calX (V)),
    \]
    for $V = 0$, we will denote $E(\calX,V)$ by $E(\calX)$.
    Note that $V$-twisted $E$-cohomology of $\calX$ can be equivalently defined as  $\op{Maps}_{\SH(\calS)}(\unit_\calS,{\pi_\calX}_*(\pi_\calX^*E\otimes \Th_\calX(V))$ by adjunction. We will write
    \[
    E^n(\calX, V):= \pi_{-n} E(\calX,V). 
    \]
    For a finite type, representable map $f: \calX \to \calY$ over $\calS$,  the \emph{$V$-twisted bivariant spectrum} of $\calX$ over $\calY$ is
    \[
     E(\calX/\calY, V) := \Maps_{\SH(\calX)}(\unit_\calX, \Sigma^{-V}_\calX f^!\pi_\calY^*E).
    \]
    We similarly define $E(\calX/\calY):= E(\calX/\calY,0)$ and $E_n(\calX/\calY, V) := \pi_n E(\calX/\calY, V)$. Note that $E(\calY/\calY, V) = E(\calY,V)$ for the identity map $\calY \to \calY$. When $V$ is the class of a perfect complex or sheaf $\calE$, we will use the notation $V$ and $\calE$ interchangeably.

    \end{definition}

\begin{definition}\label{defn:quasi_smooth}
    A representable morphism of derived stacks is \emph{quasi-smooth} if it is locally of finite presentation and the relative cotangent complex is perfect of Tor-amplitude $[-1,0]$ (with cohomological grading).
\end{definition}

When $f\colon X\to Y$ is a map of classical stacks, or in particular classical schemes, $f$ being quasi-smooth is equivalent to $f$ being lci. 
 
\begin{definition}\label{defn:smoothly_rep}
    (\cite[Definition 8.3]{KR-generalized-cohomology}) A morphism $f\colon \calX\to \calY$ of scalloped derived stacks is \emph{representably smoothable} if it admits a global factorization 
    \[
    \calX \stackrel{i}{\longrightarrow} \mathcal{A} \stackrel{p}{\longrightarrow} \calY
    \]
    where $p$ is smooth representable and $i$ is finite unramified. 

    In the particular case where $f$ is the map on quotient stacks induced by a $G$-equivariant morphism $f': X \to Y$ of algebraic spaces with $G$-action, $f$ is representably smoothable if and only if $f'$ admits a $G$-equivariant global factorization as a finite unramified morphism followed by a smooth morphism of $G$-spaces. The morphism $f'$ is said to be \emph{equivariantly smoothable} in that case.
\end{definition}

    Assume $f\colon \calX \to \calY$ is quasi-smooth, proper, representably smoothable morphism \cite[Definition 8.3]{KR-generalized-cohomology} and $L_f$ has constant virtual rank 0. There is a Gysin pushforward in cohomology, 
\[
f_!\colon E(\calX,L_f)\to E(\calY). 
\]
Explicitly, writing $\pi_\calY\colon \calY\to \calS$ for the structure map of $\calY$ so that $\pi_\calX = \pi_\calY f$, the proper pushforward is given by
\begin{align} \label{eq:Gysin-push}
    E(\calX, L_f) &= \Maps_{\SH(\calX)}(\unit_\calX, \Sigma^{L_f}_\calX\pi_\calX^*E) \\ \notag
    &\simeq^1 \Maps_{\SH(\calS)}(\unit_\calS , {\pi_\calX}_* \Sigma^{L_f}_\calX \pi_\calX^* E) \\ \notag
    &\simeq \Maps_{\SH(\calS)}(\unit_\calS , {\pi_\calY}_*f_*\Sigma^{L_f}_\calX f^* \pi_\calY^* E) \\ \notag
    &\simeq^2 \Maps_{\SH(\calY)}(\unit_\calY, f_!\Sigma^{L_f}_\calX f^* \pi_\calY^*E) \\ \notag
    &\xrightarrow{\mathrm{gys}_f} \Maps_{\SH(\calY)}(\unit_\calY, f_!f^! \pi_\calY^*E) \\ \notag
    &\to^3 \Maps_{\SH(\calY)}(\unit_\calY, \pi_\calY^*E) = E(\calY),
\end{align}
where $\simeq^1$ is given by the adjunction $\pi_\calX^* \dashv \pi_{\calX,*}$, $\simeq^2$ is given by the equivalence $f_!\simeq f_*$ for $f$ proper, see \cite[Theorem~7.1(ii)]{KR-generalized-cohomology}, $\mathrm{gys}_f: \Sigma^{L_f}_\calX f^* \to f^!$ is the Gysin transformation for the quasi-smooth, representably smoothable
morphism $f$ constructed in \cite[Theorem~8.4]{KR-generalized-cohomology}
and $\to^3$ is given by the counit of the adjunction $f_! \dashv f^!$. For any class $V \in K_0(\calX)$, the pushforward is 
\[
f_!\colon E(\calX, L_f+V)\to E(\calY, V). 
\]
Note that even when $f$ is a map between classical stacks, or even schemes, the Gysin pushforward could still be virtual. 

We would like to define an $E$-valued  degree as the pushforward of the Poincar\'{e} dual of the fundamental class of $\calX$, as in the topological equivariant degree in \cite[Definition 3.5]{BW-G-degree}. However, as in the topological case, we will be interested in an untwisted pushforward, $E(\calX)\to E(\calY)$ rather than $f_!\colon E(\calX,L_f)\to E(\calY)$, by accounting for a relative $E$-orientation that trivializes $L_f$ with respect to $E$.

\begin{definition}\label{defn:rel_oriented}
A \emph{relative $E$-orientation} of a map $f: \calX \to \calY$ is an equivalence of motivic spectra
\[
\rho_f\colon E_\calX \stackrel{\sim}{\longrightarrow} E_\calX \otimes \Th_\calX(L_f),
\]
where $E_\calX:= \pi_\calX^*E$ for $\pi_{\calX}\colon \calX\to \calS$ the structure map. We will say $f$ is \emph{relatively $E$-oriented} if there is a choice of an equivalence $E_\calX \simeq E_\calX \otimes \Th_\calX(L_f)$. Note that the specific choice of equivalence is part of the orientation data. 
A relative orientation as above immediately induces an equivalence in $E$-cohomology 
\[
\rho_f\colon E(\calX) \stackrel{\sim}{\longrightarrow} E(\calX,L_f),
\]
which we also denote by $\rho_f$ by abuse of notation. The induced equivalence at the level of the cohomology spectra, or even simply the $0^{\text{th}}$ cohomology groups, induced by the orientation that is used for most of the constructions are all denoted by $\rho_f$.

For $E$ a motivic ring spectrum over $\calS$, we say that \emph{$\calX$ is $E$-oriented} if the structure morphism $\calX \to \calS$ is relatively $E$-oriented, and we denote the orientation by $\rho_\calX$. A morphism $f: \calX \to \calY$ between $E$-oriented stacks is said to be a \emph{compatibly oriented} by an equivalence $\rho_f$ if the composite
\[
E_\calX \xrightarrow{\rho_f} E_\calX \otimes \Th_\calX(L_f) \xrightarrow{f^*\rho_\calY} f^*E_\calY \otimes \Th_\calX (f^* L_{\calY/\calS}) \otimes \Th_\calX(L_f)
\]
is homotopic to $\rho_\calX$, under the identification $E_\calX \simeq f^* E_\calY$ and $$\Th_\calX(L_{\calX/\calS}) \simeq \Th_\calX(f^*L_{\calY/\calS}) \otimes \Th_\calX(L_f)$$ given by  $L_f = L_{\calX} - f^*L_{\calY}$ in $\K_0(\calX)$.
\end{definition}

\begin{remark} \label{lem:2-out-of-3}
Let $\calS$ be a scalloped stack and let $E$ a motivic ring spectrum over $\calS$.
Given a representable morphism $f: \calX \to \calY$ between $E$-oriented, almost finitely presented derived stacks, representable over $\calS$ with respective orientations given by $\rho_\calX$ and $\rho_\calY$, we can always define a compatible relative $E$-orientation of $f$ as the composite
     \[
     (f^* \rho_\calY)^{-1} \circ \rho_\calX: E_\calX \to E_\calX \otimes \Th_\calX(L_{\calX/\calS}) \to  E_\calX \otimes \Th_\calX(L_f).
     \]
We call this the induced compatible relative orientation of the morphism $f$.
\end{remark}

\begin{remark}
A representable, étale morphism $f$ is canonically relatively oriented for any motivic ring spectrum $E$ over $\calS$ since $L_f \simeq 0$, and we always assume this equivalence to be the relative orientation unless specified otherwise. A relative $E$-orientation of a morphism $f$ defines a relative orientation on any derived base change of $f$ by pullback. 
\end{remark}

The morphism $f$ being relatively $E$-oriented as in Definition \ref{defn:rel_oriented} implies there is a choice of pushforward $f_!^{\mathrm{or}}:E(\calX)\to E(\calY)$
\begin{equation}\label{eq:oriented_pushforward}
    \begin{tikzcd} 
    E(\calX, L_f)\arrow["f_!"]{r}  & E(\calY) \\
    E(\calX)\arrow[swap, "f^{\mathrm{or}}_! "]{ur} \arrow [, "\rho_f"]{u}& 
    \end{tikzcd} 
\end{equation}
that factors through $f_!\colon E(\calX, L_f)\to E(\calY)$. Though we work with  motivic ring spectra and scalloped stacks rather than genuine $G$-spectra and topological $G$-manifolds, Definition \ref{defn:rel_oriented} otherwise mirrors  \cite[Definition 3.1]{BW-G-degree}. Note that a choice of orientation, hence a choice of pushforward $f_!^{\mathrm{or}}\colon E(\calX)\to E(\calY)$, may not be canonical in general.

\begin{remark} In practice, one is typically interested in specific ring spectra, such as homotopy invariant K-theory $\KGL$, algebraic cobordism $\MGL$, and motivic cohomology $\HZ$, for computational purposes. See \cite[Section 10]{KR-generalized-cohomology} for the constructions of these theories for scalloped stacks. 
\end{remark}

Accounting for a relative $E$-orientation, we can define the degree of $f\colon \calX\to \calY$ as follows. 

\begin{definition}\label{defn:global_degree}
    Let $E$ be a motivic ring spectrum over $\calS$, and let $f\colon \calX\to \calY$ be a relatively $E$-oriented, proper, quasi-smooth,  representably smoothable morphism between smooth, proper derived scalloped stacks over $\calS$ such that $L_f$ has constant virtual rank 0.  Denote the choice of relative orientation by $\rho_f\colon E(\calX)\stackrel{\sim}{\to} E(\calX, L_f)$. Recall the oriented pushforward $f^{\mathrm{or}}_!$ of $f$, see \eqref{eq:oriented_pushforward}. The \emph{$E$-degree} of $f$ is $f_!^{\mathrm{or}}(1_\calX)\in E^0(\calY)$, denoted $\deg f$. Equivalently, $f_{!}(\rho_{f}1_\calX)$ in $E^0(\calY)$. \end{definition}

\begin{notation} \label{notation:degree_of_stack} Given $\calX$ over $\calS$ which is $E$-oriented with structure map $\pi_{\calX}\colon \calX\to \calS$, we will write $\deg(\calX)$ for $\deg \pi_{\calX} = \pi_{\calX,!}^{\mathrm{or}}(1_{\calX}) \in E(\calS)$ and call this the $E$-\emph{degree of $\calX$}. 
\end{notation}

    Equivalently, the $E$-degree of $f\colon \calX\to \calY$ could be defined as $f_*(\rho_f(1_\calX) \cap [\calX/\calY])$, where
    $[\calX/\calY] \in E_0(\calX/\calY, -L_f)$ is the relative fundamental class (see \cite[Definition~9.13]{KR-generalized-cohomology}), and
    \[
    f_*: E_0(\calX/\calY) \rightarrow E_0(\calY/\calY) = E^0(\calY)
    \]
    is the pushforward for Borel-Moore homology relative to $\calY$,
    when the relative fundamental class exists. See \cite[Definition 9.14]{KR-generalized-cohomology}.

Before defining the local degree, we first remark on the comparison between the Definition \ref{defn:global_degree} and the degree obtained as an oriented pushforward in the quadratically enriched setting \cite{KLSW-rational-curves}.

\begin{remark}\label{rmk:degree_comparison}
    The reader familiar with enriched enumerative geometry may wonder how the relative orientation of Definition \ref{defn:rel_oriented} and oriented pushforward $f_!^{\mathrm{or}}$ compares to the quadratically enriched setting. Suppose $f\colon X\to Y$ is a finite, flat, local complete intersection morphism of schemes over a field. Kass-Levine-Solomon-Wickelgren define the quadratically enriched degree of such a map, which is valued in $\mathcal{GW}(Y)$, the Grothendieck--Witt sheaf of non-degenerate symmetric bilinear forms on $Y$ \cite[Definition 2.4]{KLSW-rational-curves}. Their degree depends on a choice of relative orientation for $\omega_f:=\op{det}L_f$. We point out that a quadratically enriched relative orientation in the sense of loc. cit. differs from Definition \ref{defn:rel_oriented}, it instead asks that $\omega_f \cong L^2$ for some line bundle $L\to X$. We compare the resulting definitions of degrees in this remark. 
    
    Suppose $f\colon X\to Y$ is a non-equivariant finite, flat, local complete intersection morphism between schemes of the same dimension over a field $k$. We compare the degree obtained by Definition \ref{defn:global_degree} above with the degree of \cite[Definition 2.4]{KLSW-rational-curves} given an appropriate choice of relative orientation. Suppose $f$ is relatively oriented in the sense of Kass--Levine--Solomon--Wickelgren, i.e., suppose there exists some line bundle $L\to X$ and choice of isomorphism $ \omega_f \stackrel{\rho_f}{\simeq} L^{\otimes 2}$, where $\omega_f:=\op{det}L_f$. Let $\mathcal{GW}$ be the Grothendieck--Witt sheaf, so $\mathcal{GW}(Y)$ has sections given by isomorphism classes of non-degenerate, symmetric bilinear forms $V \otimes V\to \mathcal{O}_Y$ for locally free $V$ on $Y$. Note $1_X\in \mathcal{GW}(X)$ is given by the multiplication $\calO_X\otimes \calO_X \to \calO_X$, which is equivalent to $\Hom_{\calO_X}(\calO_X,\mathcal{H}om_{\calO_X}(\calO_X,\calO_X))$ by adjunction. Tensoring with the orienting line bundle $L$, by adjunction and the isomorphism $\rho_f$ there is an equivalence 
    \begin{align*}
    \Hom_{\calO_X}(\calO_X, \calHom_{\calO_X}(\calO_X,\calO_X)) & \stackrel{\otimes L}{\cong} \Hom_{\calO_X}(L, \calHom_{\calO_X}(L, L^{\otimes 2})) \\ 
    &\cong \Hom_{\calO_X}(L, \calHom_{\calO_X}(L, \omega_f)) = \rho_f(1_X) \in \mathcal{GW}(X,\omega_f). 
    \end{align*}
    Pushing this forward and composing with the trace $\op{Tr}_f$ from coherent duality, we obtain 
    $$
    \Hom_{\calO_X}(L, \calHom_{\calO_X}(L, \omega_f)) \to \Hom_{\calO_Y}(f_*L, \calHom_{\calO_Y}(f_*L, \calO_Y)), 
    $$
    In total, the quadratically enriched degree of Kass--Levine--Solomon--Wickelgren is a bilinear form on $\calO_Y$, 
    $$
    f_*L\otimes f_*L\to f_* L^{\otimes 2} \stackrel{f_*\rho_f}{\simeq} f_*\omega_f \stackrel{\op{Tr}_f}{\to} \calO_Y,$$
    which can be described using the above construction. 
    This is equivalent to applying $f_!$ to $\rho_f(1_X)$ by Grothendieck--Serre duality. 
    It follows  that $\deg(f) = f^{\mathrm{or}}_!(1_X)$ agrees with \cite[Definition 2.4]{KLSW-rational-curves} when Definition \ref{defn:rel_oriented} is replaced by the quadratic relative orientation definition of \cite[2.1]{KLSW-rational-curves}.
\end{remark}

We now define the local degree and show that the degree can be computed as a sum of local degrees. See \cite{KW-local-degree, KLSW-rational-curves, BW-G-degree} for examples of local to global degree theorems in quadratic and (topological) equivariantly enriched contexts for suitable schemes and $G$-manifolds respectively. 

Let $\calZ \subseteq \calY$ be a closed derived substack of $\calY$. 
There is a homotopy cartesian square of scalloped stacks 
\begin{equation}\label{diagram:fiber_over_point}
\begin{tikzcd}
\calX_\calZ \arrow[swap, d, "f'"] \arrow[r, hookrightarrow, "i_{\calX_{\calZ}}"] \arrow[dr, phantom, "\lrcorner_{\text{ }h}", very near start] & \calX \arrow[d,"f"]\\
\calZ \arrow[swap, r, hookrightarrow, "i"] & \calY
\end{tikzcd}
\end{equation}
where $\calX_{\calZ}$ is the pullback and $f'$ is $f|_{\calX_\calZ}$. 

For any closed (note it is also open) component $\calZ'$ of $\calX_\calZ$, we can restrict $f'$ in \eqref{diagram:fiber_over_point} to $\calZ'$ to obtain 
\begin{center}
    \begin{tikzcd}
        \calZ' \arrow[hookrightarrow, r,"i_{\calZ'}" ] \arrow[swap, d, "f_{\calZ'}"] & \calX \arrow[d,"f"]\\
        \calZ \arrow[hookrightarrow, swap, r, "i"] & \calY
    \end{tikzcd}
\end{center}
where $f_{\calZ'}$ is $f'|_{\calZ'}$, though the above square is no longer cartesian. By pullback, $f_{\calZ'}$ is oriented by $i_{\calZ'}^*\rho_f \colon E(\calZ')\stackrel{\sim}{\to} E(\calZ', L_{f_{\calZ'}})$, denote this orientation by $\rho_{f_{\calZ'}}$. As was the case for $f$, there is an oriented pushforward $f_{\calZ'!}^{\mathrm{or}}$ in $E$-cohomology, defined as the composite
\begin{center}
\begin{tikzcd}
E(\calZ') \arrow[r, "\rho_{f_{\calZ'}}"] \arrow[rr, bend right=15, "f_{\calZ'!}^{\mathrm{or}}"'] &  E(\calZ', L_{f_{\calZ'}})\arrow[r,"f_{\calZ'!}"] & E(\calZ)
\end{tikzcd}
\end{center}

Note that $f'$ and $f_{\calZ'}$ are quasi-smooth, proper, and representably smoothable since $f$ is. 

\begin{definition}\label{defn:local_degree}
    The local degree of $f$ at $\calZ'\subseteq \calX_\calZ$ is $\deg_{\calZ'} f:=(f_{\calZ'})_!^{\mathrm{or}}(1_{\calZ'})\in E^0(\calZ)$. Equivalently, $f_{\calZ',!}(\rho_{f_{\calZ'}}1_{\calZ'})$ in $E^0(\calZ)$. 
\end{definition}

The local to global relation between the degree of $f$ and the sum of local degrees over the fiber of $\calZ$ is immediate by base change:

\begin{proposition}\label{prop:local_to_global}
    Let $E$ be a motivic ring spectrum over $\calS$, and let $f\colon \calX \to \calY$ be a quasi-smooth, proper, representably smoothable morphism of derived scalloped stacks that is relatively $E$-oriented, and suppose $L_f$ has constant virtual rank 0. Let $i\colon \calZ\hookrightarrow \calY$ be a closed derived substack of $\calY$. Then 
\begin{equation}\label{eq:deg_eq}
i^* \deg f = \sum_{\calZ'\subseteq \calX_{\calZ}} \deg_{\calZ'} f
\end{equation}
in $E^0(\calZ)$, where the sum is taken over all components $\calZ'$ in $\calX_{\calZ}$. 
\end{proposition}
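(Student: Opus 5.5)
The proof reduces the statement to base change for the Gysin pushforward along the homotopy cartesian square \eqref{diagram:fiber_over_point}, followed by additivity over the clopen decomposition of $\calX_\calZ$.

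\textbf{Step 1: base change.} Write $\rho_{f'} := i_{\calX_\calZ}^*\rho_f$ for the pulled-back orientation. Since the square is derived cartesian, we have $L_{f'} \simeq i_{\calX_\calZ}^* L_f$, which again has virtual rank $0$. The key claim is
\[
i^* f_!(\alpha) = f'_!\bigl(i_{\calX_\calZ}^*\alpha\bigr) \quad \text{for } \alpha \in E^0(\calX, L_f).
\]
This identity is assembled from three compatibilities. The first is proper base change $i^*f_* \simeq f'_* i_{\calX_\calZ}^*$, equivalently $i^*f_!\simeq f'_! i_{\calX_\calZ}^*$, from the six functor formalism of \cite{KR-generalized-cohomology}. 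The second is compatibility of the Gysin transformation $\mathrm{gys}_f$ of \cite[Theorem~8.4]{KR-generalized-cohomology} with derived base change: under the base change identification $i_{\calX_\calZ}^* f^! \to f'^! i^*$, the pullback of $\mathrm{gys}_f$ is identified with $\mathrm{gys}_{f'}$. The third is compatibility of the counits of $f_!\dashv f^!$ and $f'_!\dashv f'^!$ with these base change maps.

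Tracing the chain \eqref{eq:Gysin-push} with these three compatibilities gives the displayed identity. Applying it to $\alpha=\rho_f(1_\calX)$ and using $i_{\calX_\calZ}^*1_\calX = 1_{\calX_\calZ}$ yields $i^*\deg f = f'^{\mathrm{or}}_!(1_{\calX_\calZ})$.

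\textbf{Step 2: additivity.} Decompose $\calX_\calZ=\coprod \calZ'$ into its clopen components; there are finitely many, since the stacks are quasi-compact. Then
\[
E(\calX_\calZ, L_{f'})\simeq \prod E(\calZ', L_{f_{\calZ'}}).
\]
Under this splitting, $1_{\calX_\calZ}$ corresponds to $(1_{\calZ'})$ and $\rho_{f'}$ restricts to $\rho_{f_{\calZ'}}$. The pushforward along a disjoint union is the sum of the pushforwards: $f'_! = \sum f_{\calZ'!}\circ i_{\calZ'\subseteq\calX_\calZ}^*$. This holds because $\SH$ of a disjoint union is the product, $f'_!$ of a coproduct is the direct sum, and the Gysin map is local, so it restricts along open immersions. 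Combining this with Step 1 gives \eqref{eq:deg_eq}.

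\textbf{Main obstacle.} The one nontrivial input is the base change compatibility of the Gysin transformation for quasi-smooth, representably smoothable morphisms against an arbitrary derived closed immersion. I would extract it from the construction in \cite[Theorem~8.4]{KR-generalized-cohomology}, which factors $f$ as a finite unramified map followed by a smooth representable map. The factorization of $f$ pulls back to one of $f'$. For the smooth part, compatibility follows from purity being stable under base change. For the quasi-smooth closed part, it follows from deformation to the normal bundle commuting with derived base change.
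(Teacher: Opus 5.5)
Your proposal is correct and follows essentially the same route as the paper: base change along the homotopy cartesian square to get $i^*\deg f = (f')^{\mathrm{or}}_!(1_{\calX_\calZ})$, then additivity of $E$-cohomology over the clopen decomposition of $\calX_\calZ$. The only difference is that you unpack the base change step into proper base change, base change of the Gysin transformation, and compatibility of counits, whereas the paper simply cites proper base change for quasi-smooth morphisms \cite[Theorem 6.1.ii]{KR-generalized-cohomology}.
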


\begin{proof}
    Given such a $\calZ\subseteq \calY$ and any $\calZ'\subseteq \calX_\calZ$, the following diagram commutes
\begin{equation}\label{diag:local_to_global_diag}
\begin{tikzcd}
\calZ'\arrow[hookrightarrow, r,"i_{\calZ'}" ] \arrow[swap, bend right=20, dr, "f_{\calZ'}"] & \calX_\calZ \arrow[swap, d, "f'"] \arrow[r, hookrightarrow, "i_{\calX_{\calZ}}"] \arrow[dr, phantom, "\lrcorner_{\text{ }h}", very near start] & \calX \arrow[d,"f"]\\
 & \calZ \arrow[swap, r, hookrightarrow, "i"] & \calY
\end{tikzcd}
\end{equation}
where the right square is homotopy cartesian and $f_{\calZ'} = f'|_{\calZ'}$. By additivity of $E$-cohomology (see \cite[Definition 5.2 and Example 5.12]{KR-generalized-cohomology}), it is immediate that
\begin{align*}
i^*\deg(f) & = i^*f_!^{\mathrm{or}}(1_\calX) \\
&= (f')_!^{\mathrm{or}}(i_{\calX_{\calZ}})^*(1_\calX) \\
&= (f'_!)^{\mathrm{or}}(1_{\calX_\calZ}) \\
&= \sum_{\calZ'\subseteq\calZ} i_{\calZ',*}f_{\calZ'!}^{\mathrm{or}}(1_{\calZ'})\\
&= \sum_{\calZ'\subseteq\calZ} \deg f_{\calZ'},
\end{align*}
where $i^*f_!(1_\calX) = (f')_!(i_{\calX_{\calZ}})^*(1_\calX)$ by proper base change for quasi-smooth morphisms \cite[Theorem 6.1.ii]{KR-generalized-cohomology}. This finishes the proof since $\deg_{\calZ'} f = \deg f_{\calZ'}$ by definition. 
\end{proof}

Proposition \ref{prop:local_to_global} is immediate by proper base change for any sufficiently nice $\calZ\subseteq\calY$, but is most interesting when $\calZ$ is a closed point of a scheme or particularly nice stack (e.g., tame Deligne-Mumford). Even if $\calZ$ is a closed point of a scheme and $f$ is a smooth, proper map between smooth $G$-schemes for finite $G$, Equation \eqref{eq:deg_eq} will not be independent of choice of point $\calZ$, as one might choose points of the target with different stabilizers. This is also true in the topological case \cite{BW-G-degree}. The case of the equivariant degree and local degree of a map of $G$-schemes for finite $G$ is presented in Subsection \ref{subsection:degree_for_schemes}. 

Proposition \ref{prop:local_to_global} is also interesting when $\calX$ and $\calY$ are schemes and $\calZ\subseteq\calY$ is a closed point but $i_{\calZ}\colon \calZ \hookrightarrow \calY$ and $f\colon \calX \to \calY$ are not transverse. Only requiring the fiber $\calX_\calZ$ to be quasi-smooth rather than smooth is a departure from the local to global degree formulation of Bethea-Wickelgren \cite{BW-G-degree}, and ultimately will allow us to compute Euler numbers for vector bundles over smooth, projective schemes with respect to a section that may not be transverse to the zero section. This is expanded upon in Subsection \ref{section:bezout}.

We end this section by stating a general form of oriented base change for later reference, which we don't prove. 

\begin{proposition}\label{prop:oriented_base_change}
    Let $E$ be a motivic ring spectrum over $\calS$, and let $f\colon \calX \to \calY$ be a quasi-smooth, proper, representably smoothable morphism of derived scalloped stacks which is relatively $E$-oriented and such that $L_f$ has constant virtual rank 0. Let $i\colon \calZ\hookrightarrow \calY$ be a closed derived substack of $\calY$, and consider the pullback 
    \begin{equation}\label{diag:oriented_base_change}
        \begin{tikzcd} 
        \calX_\calZ\arrow{r}{i_{\calX_\calZ}} \arrow[swap, "f'"]{d}& \calX \arrow{d}{f}\\ 
        \calZ \arrow{r}{i_\calZ} & \calY
        \end{tikzcd}
    \end{equation}
    where $f' = i_\calZ^*f$. If $f$ is oriented by $\rho_f$, then $f'$ is oriented by $i_\calZ^*\rho_f$. Furthermore, $f_!^{\mathrm{or}}i_\calZ^* \simeq i_{\calX_\calZ}^*(f')_!^{\mathrm{or}}$. 
\end{proposition}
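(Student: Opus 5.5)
The plan is to split the claim into two parts: the orientation statement, and the compatibility of the two pushforwards. The second part is the identity $i_\calZ^* f_!^{\mathrm{or}} \simeq (f')_!^{\mathrm{or}} i_{\calX_\calZ}^*$ as maps $E(\calX)\to E(\calZ)$. I read the displayed formula in the statement this way, since only this composite order is well typed. This identity factors into a square for the orientations and a square for the Gysin pushforwards $f_!$ and $f'_!$. I will prove each square separately and then paste them.

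\emph{Step 1 (orientation).} Since \eqref{diag:oriented_base_change} is homotopy cartesian, the derived base change property of the cotangent complex gives a canonical equivalence $L_{f'} \simeq i_{\calX_\calZ}^* L_f$. By \cite[Theorem 4.5]{KR-generalized-cohomology} the Thom spectrum functor $\Th$ is natural in pullback: $i_{\calX_\calZ}^*\Th_\calX(L_f)\simeq \Th_{\calX_\calZ}(i_{\calX_\calZ}^*L_f)$. Since $E$ is a motivic ring spectrum, we also have $i_{\calX_\calZ}^*E_\calX \simeq E_{\calX_\calZ}$. The functor $i_{\calX_\calZ}^*$ is symmetric monoidal, so applying it to $\rho_f$ gives an equivalence
\[
E_{\calX_\calZ} \xrightarrow{\sim} E_{\calX_\calZ}\otimes \Th_{\calX_\calZ}(L_{f'}),
\]
and this is the orientation $\rho_{f'} := i_{\calX_\calZ}^*\rho_f$. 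Taking mapping spectra from the unit shows that the square formed by $\rho_f$, $\rho_{f'}$ and the two pullback maps $i_{\calX_\calZ}^*\colon E(\calX)\to E(\calX_\calZ)$ and $E(\calX,L_f)\to E(\calX_\calZ,L_{f'})$ commutes. In particular, $i_{\calX_\calZ}^*$ sends $1_\calX$ to $1_{\calX_\calZ}$.

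\emph{Step 2 (Gysin pushforward).} Next I show $i_\calZ^* f_! \simeq f'_! i_{\calX_\calZ}^*$ as maps $E(\calX,L_f)\to E(\calZ)$. I will go through the chain \eqref{eq:Gysin-push} link by link.
\begin{itemize}
\item The adjunction equivalences $\simeq^1$ and $\simeq^2$ are compatible with pullback because of proper base change $i_\calZ^* f_* \simeq f'_* i_{\calX_\calZ}^*$ \cite[Theorem 6.1]{KR-generalized-cohomology} together with $f_!\simeq f_*$ for proper maps.
\item For the Gysin link, I use that the Gysin transformation of \cite[Theorem 8.4]{KR-generalized-cohomology} is stable under base change. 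Concretely, the composite $i_{\calX_\calZ}^*\Sigma^{L_f}f^* \xrightarrow{\mathrm{gys}_f} i_{\calX_\calZ}^*f^! \to f'^! i_\calZ^*$ agrees with $\Sigma^{L_{f'}}f'^* i_\calZ^* \xrightarrow{\mathrm{gys}_{f'}} f'^! i_\calZ^*$. To verify this, I choose a representably smoothable factorization $f = p\circ j$ and pull it back along $i_\calZ$ to get a factorization of $f'$. I then check the compatibility separately for the smooth part (purity) and for the finite unramified part (deformation to the normal cone), both of which are built into the construction in loc.\ cit.
\item For the counit link $\to^3$, I use the exchange transformation compatibility of the counit of $f_!\dashv f^!$ with the base change equivalence $i_\calZ^*f_!\simeq f'_!i_{\calX_\calZ}^*$.
\end{itemize}

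\emph{Step 3.} Pasting the square from Step 1 with the square from Step 2 gives $i_\calZ^* f_!(\rho_f(-)) \simeq f'_!(\rho_{f'}(i_{\calX_\calZ}^*(-)))$, which is the claim. Proposition \ref{prop:local_to_global} then recovers the special case evaluated at $1_\calX$.

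The main obstacle is the base change compatibility of $\mathrm{gys}_f$ in Step 2, since the paper only cites the existence of the Gysin transformation. I would first look in \cite{KR-generalized-cohomology} for an explicit statement that the Gysin map is compatible with arbitrary derived base change, analogous to \cite{DJK} for schemes. Failing that, I would reduce to the two cases in the factorization. The finite unramified case is the delicate one, because there the deformation space itself must be shown to commute with base change.
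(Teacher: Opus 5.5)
The paper states this proposition explicitly without proof, so there is no argument of its own to diverge from. Your proof is correct, and it is exactly the unpacking of the one-line appeal to ``proper base change for quasi-smooth morphisms'' that the paper makes in the proof of Proposition~\ref{prop:local_to_global}. Your reading of the displayed formula as $i_\calZ^* f_!^{\mathrm{or}} \simeq (f')_!^{\mathrm{or}} i_{\calX_\calZ}^*$ is right, and so is your tacit correction of the pulled-back orientation to $i_{\calX_\calZ}^*\rho_f$ rather than $i_\calZ^*\rho_f$.

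The only non-formal input is the compatibility of $\mathrm{gys}_f$ with derived base change. Equivalently, this is the stability of the relative fundamental class $[\calX/\calY]$ under derived base change, as in \cite{DJK} for schemes. It belongs to the standard package accompanying the Gysin transformation, so your fallback reduction through a factorization $f = p\circ j$ should not be needed. If you do carry it out, the things to track are that the exchange maps are compatible with composition and that $\mathrm{gys}_{f'}$ may be computed from the pulled-back factorization.
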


\subsection{Equivariant degree and local degree} \label{subsection:degree_for_schemes}

 We consider the setup of Subsection \ref{subsection:degree_for_scalloped} for $G$-equivariant maps between $G$-schemes over $\Spec k$ for finite $G$ and $k$ a field. Let $G$ be a finite group scheme that is tame over $\Spec (k)$ \cite[Definition 2.26, Example 2.27]{Hoyois6ff}. For example, this includes the case when $G$ is a finite abstract group and $|G|$ coprime to $\op{char}(k)$ or linearly reductive over $k$. We can more generally let $G$ be a flat, finitely presented group scheme over $\Spec k$ which is tame in the sense of \cite[Definition 2.2.6]{Hoyois6ff} or a nice group in the sense of \cite[Definition 2.1]{KR-generalized-cohomology} and formulate the $G$-degree and local degree under different assumptions, but we focus on the case where $G$ is finite. Under these assumptions, the six functor formalism of Section \ref{subsection:degree_for_scalloped} for scalloped stacks agrees with that of Hoyois for $G$-quasi-projective schemes \cite{Hoyois6ff}.

Let $E_G\in \SH^G(k)$ be an equivariant motivic ring spectrum. Let $X$ and $Y$ be $G$-schemes of the same dimension $d$, and let $f\colon X\to Y$ be a proper, quasi-smooth 
$G$-equivariant, equivariantly smoothable morphism. By \cite[Tag 068E Lemma 37.62.11]{stacks-project}, $f$ is a local complete intersection morphism. 
Let  $L_f\in K_0^G(X)$ denote the class of the relative cotangent complex of $f$, and 
note that $L_f$ is perfect and concentrated in cohomological degrees $[-1,0]$ when $f$ is quasi-smooth.  
Assume $f$ is relatively $E_G$-oriented as in Definition \ref{defn:rel_oriented}. Let $$\rho_f\colon E_G(X, L_f)\simeq E_G(X)$$ denote the relative orientation of $f$. Note that for every subgroup $H$ of $G$, $f$ is $H$-equivariant and is automatically $E_H$ oriented, for $E_H$  the restriction of $E_G$ to $\SH^H(k)$.

Recall from Definition \ref{defn:global_degree} we have a $G$-equivariant (proper) pushforward 
\[
f_!\colon E_G(X,L_f)\to E_G(Y).
\]
We thus have an oriented pushforward 
\[
f^{\mathrm{or}}_!\colon E_G(X)\to E_G(Y)
\]
after composing with $\rho_f\colon E_G(X)\simeq E_G(X,L_f)$. Definition \ref{defn:global_degree} specifies in the case of schemes with a $G$-action a \emph{$G$-degree}. 

\begin{definition}\label{defn:G-degree} 
The \emph{$G$-degree} of $f\colon X\to Y$ is $f^{\mathrm{or}}_!(1_X)\in E^0_G(Y)$, denoted $\deg^G(f)$.
\end{definition}

We will start with a $\Spec k(y)$-valued point $y$ in $Y$ whose orbit $G \cdot y = G \times_{G_y} \Spec k(y)$ is closed in $Y$, where $G_y$ denotes the stabilizer of $y$. We define a local equivariant degree for all points in the fiber of the orbit. When $X$ and $Y$ are $G$-schemes and $f$ is $G$-equivariant, we will require the fiber $X_y$ to be both finite and a $G_y$-scheme itself.

Henceforth, we use the conventional notation $B_{L}H$ for $[\Spec L/H]$, where $L/k$ is a finite extension of fields
or by abuse of notation, we will also allow $\Spec L$ to denote a derived affine scheme finite over $k$ whose classical truncation is the spectrum of a field $L$. We write $BG$ for $[\Spec k/G]$, when $k$ is the base field.

Let $G \cdot y$ be a closed orbit in $Y$, and assume that $X_y = X \times_Y \Spec k(y)$ is finite, zero-dimensional, and reduced; that is, its classical truncation is reduced. Under these assumptions,
\[
[X_y/G_y] = \bigsqcup_{G_y \cdot x \subseteq X_y} B_{k(x)}G_x,
\]
where the right-hand side is a disjoint union over all the disjoint $G_y$-orbits $G_y \cdot x$ in the $G_y$-scheme $X_y$. This follows since
\[
[G_y \cdot x / G_y] = B_{k(x)}G_x = [\Spec k(x) / G_x],
\]
where $G_x$ denotes the stabilizer of the point $x: \Spec k(x) \to X_y$.

Note that we again abuse notation by writing $k(x)$ for a derived scheme whose classical truncation is the residue field of a genuine point of the classical truncation of $X$.
For any $B_{k(x)}G_x \subseteq [X_y/G_y]$, we have an inclusion $B_{k(x)}G_x \hookrightarrow [X/G]$. 

When $X_y$ is classical and étale over $\Spec k(y)$, the derived ring $k(x)$ is an actual field. In this case, the fiber $X_y$ over $y$ is a disjoint union of spectra of fields, and the $k(x)$ are the usual residue fields at the points $x$ of $X_y$.
This holds, for instance, if $X \to Y$ is smooth at all points lying over $y$.
    
As in the general case of Subsection \ref{subsection:degree_for_scalloped}, there is a diagram 
\begin{equation}
    \begin{tikzcd} 
    \left[X_y/G_y\right]\arrow[dr, phantom, "\lrcorner", very near start] \arrow[swap, d,"f' := f|_{X_y}"] \arrow[r] & \left[X/G\right]\arrow[d, "f"] \\ 
    B_{k(y)}G_y \arrow[r] & \left[Y/G\right]
    \end{tikzcd} 
\end{equation}
For any $B_{k(x)}G_x\subseteq [X_y/G_y]$, we can restrict $f'$ to $B_{k(x)}G_x \subseteq [X_y/G_y]$ to obtain  a diagram
\begin{equation}\label{diag:Gx_orbit}
    \begin{tikzcd} 
    B_{k(x)}G_x\arrow[swap, d," f'|_{B_{k(x)}G_x}"] \arrow[r, ] & \left[X/G\right]\arrow[d, "f"] \\ 
    B_{k(y)}G_x \arrow[r, "i_x"] & \left[Y/G\right]
    \end{tikzcd} 
\end{equation}
which is not cartesian in general. 

The fiber $[X_y/G_y]$ is closed in $[X/G]$,  but any 
$\Spec k(x)$ may not be $G_y$-invariant in $X_y$. Said differently, $\Spec k(x)$ is only a $G_x$-scheme, not a $G_y$-scheme. In particular, to eventually obtain a sum of local degrees in the $G_y$-equivariant cohomology of $B_{k(y)}G_y$, we require a transfer which accounts for change of group from $G_x$ to $G_y$. We will also need transfer which accounts for change of field for the extension $k(y)\subseteq k(x)$ in later sections. We define both now. 

\begin{definition}[Change of groups]
\label{defn:group_transfer}
Let $H$ be a subgroup of $G$ and $L/k$ a finite, separable extension, and let $i_H\colon B_LH \to B_LG$ be the induced map. The \emph{transfer from $H$ to $G$} is the proper pushforward in $E$-cohomology along $i_H$, 
\[
i_{H!}\colon E^0(B_LH )\to E^0(B_LG),
\]
which will be denoted $\Tr_H^G$. In scheme notation, $\Tr_H^G \colon E^0_H(\Spec L)\to E^0_{G}(\Spec L)$. 
\end{definition}

 Note transfer from $H$ to $G$ leaves the field of definition $L$ unchanged. Of course, we will be particularly interested in the transfer for $G_y$-orbits of $X_y$, $\Tr_{G_x}^{G_y}$. 

\begin{definition}[Change of fields]
\label{defn:field_transfer}
     Let $k\hookrightarrow L$ be a finite, separable extension of fields with $i_{L}\colon \Spec L\to \Spec k$ an $H$-equivariant map for some subgroup $H$ of $G$. The \emph{transfer from $L$ to $k$} is the proper pushforward in $E_H$-cohomology along $i_{L}$, 
    \[
    i_{L,!}\colon E^0(B_LH)\to E^0(B_kH),
    \]
    which will be denoted $\Tr_{L/k}$. In scheme notation, $\Tr_{L/k}: E^0_H(\Spec L)\to E^0_H(\Spec k)$. 
\end{definition}

Composing $\Tr_{L/k}$ and $\Tr_{H}^{G}$, we obtain a change of group and field, 
\begin{equation}\label{eq:group_and_field_transfer}
E^0(B_L H)\stackrel{\Tr_{L/k}}{\longrightarrow}E^0(B_{k} H)\stackrel{\Tr_{H}^{G}}{\longrightarrow} E^0(B_k G). 
\end{equation}
The field transfer will not reappear in this section, but becomes important in Section \ref{section:bezout}. 

\begin{exa}\label{example:transfer} Let $L$ be a finite extension of field $k$. 
When $E$ is homotopy invariant K-theory, $\mathrm{KGL}$, $B_LG = [\Spec (L)/G]$ for some finite group scheme $G$, and $i_H\colon B_LH \to B_LG$ for some $B_LH = [\Spec (L) / H]$ for a subgroup $H$ of $G$, the transfer from $H$ to $G$ is  \[
i_{H,!}\colon R_L(H) \cong \mathrm{KGL}^0(B_LH) \to \mathrm{KGL}^0(B_LG) \cong R_L(G)
\]
defined by $V\mapsto \ind_H^G V := V\otimes L[G/H]$. When $BG = [\Spec(k) / G]$ and $B_LH = [\Spec (L) / H]$, the composition of $\Tr_{L/k}$ and $\Tr_H^G$ from \eqref{eq:group_and_field_transfer} is the map 
\[
\Tr_{L/k} \Tr_H^G\colon R_L(H)\cong \KGL^0(B_LH)\to \KGL^0(BG) \cong R_k(G)
\]
given by
\[
V\mapsto \op{Res}_{L/k}(\ind_H^G V) 
\] 
for $V\in R_L(H)$. Moreover if $V$ has trivial $H$-action, then 
\[
\op{Res}_{L/k}(\ind_H^G V) \cong \op{Res}_{L/k}(V)\otimes k[G/H].
\]

Suppose $A$ is a connective quasi-smooth $k$-algebra with $\pi_0(A)=L$ a finite field extension of $k$. 
For example, take $L$ to be a finite separable extension of $k$ and $\Spec A$ to be the derived self intersection of the origin inside $\mathbb{A}^1_L$, i.e., $A = L \otimes^{\mathbf{L}}_{L[t]} L$ where both maps are given by evaluation at $t=0$.
Consider $V \in R_L(H)$ as above and its corresponding class in 
\[
\mathrm{KGL}^0(B_A H) \cong \mathrm{KGL}^0(B_L H) \cong R_L(H),
\]
where the first isomorphism holds by derived invariance.
The pushforward of the class in $\mathrm{KGL}^0(B_k G)$ along the quasi-smooth map $B_A H \to B_k G$ is given by
\[
\left(\sum_{j\geq 0}(-1)^j \dim_L\!\bigl(\wedge^j H^{-1}(L_{A/L})\bigr)\right)
\cdot
\bigl[k[G]\otimes_{k[H]}\Res_{L/k}(V)\bigr].
\]
Since $L_{A/L}$ has Tor-amplitude in $[-1,-1]$, this coefficient is
\[
\sum_{j\geq 0}(-1)^j \dim_L\!\bigl(\wedge^j H^{-1}(L_{A/L})\bigr)
=
(1-1)^{\dim_L H^{-1}(L_{A/L})}.
\]
Hence the class is $0$ unless $H^{-1}(L_{A/L})=0$, equivalently it is zero unless $A=L$ is a finite separable field extension of $k$ as above.

\end{exa}

We now define the local equivariant degree. Note that $B_{k(x)}G_x\to B_{k(y)}G_x$ in \eqref{diag:Gx_orbit} is relatively $E_{G_x}$-oriented by the orientation 
\[
\rho_{f_x}:=i_x^*\rho_f\colon E(B_{k(x)}G_x,  L_{f_x})\simeq E(B_{k(x)}G_x).
\]

\begin{definition}\label{defn:local_G_degree}
 Let 
    \[
    f_x\colon B_{k(x)}G_x\to B_{k(y)}G_x
    \]
    be the restriction of $f\colon X\to Y$ to $\Spec k(x)\in X_y$, which is only a $G_x$-equivariant map of schemes. 
    As in the global case, let $$(f_{x})_{!}\colon E^0(B_{k(x)}G_x, L_{f_x})\to E^0(B_{k(y)}G_x)$$ denote the pushforward. Composing with the relative orientation $\rho_{f_x}$, we have an  oriented pushforward in degree 0,
\[
 E^0(B_{k(x)}G_x)\stackrel{\rho_{f_x}}{\longrightarrow} E^0(B_{k(x)}G_x, L_{f_x})\stackrel{(f_{x})_{!}}{\longrightarrow} E^0(B_{k(y)}G_x),
\]
which we will denote by $(f_{x})_{!}^{\mathrm{or}} \colon E^0(B_{k(x)}G_x)\to E^0(B_{k(y)}G_x).$ The \emph{local equivariant degree} of $f$ at $B_{k(x)}G_x$ is $\deg^{G_x}_xf:=(f_{x})_{!}^{\mathrm{or}}(1)$ in $E^0(B_{k(y)}G_x)$. 
\end{definition}

\begin{remark}
    Note that $L_{f_x}$ is always oriented when $f_x\colon B_{\Spec k(x)}G_x \to B_{\Spec k(y)}G_x$ is \'etale. Still, we require the local degree of Definition \ref{defn:local_G_degree} to be defined using the relative $E_G$-orientation $i_x^*\rho_f$ so that the relative orientation of $f_x$ is compatible with that of $f$. 
\end{remark}

We now state and prove the local to global degree result for $G$-schemes.

\begin{theorem}\label{thm:local_global_for_schemes} Let $E_G \in \SH^G(k)$ be a motivic ring spectrum, and let $f\colon X\to Y$ be a proper, quasi-smooth, equivariantly smoothable $G$-equivariant map between $G$-schemes $X$ and $Y$ of the same dimension. Assume $f$ is relatively $E_G$-oriented.  
 Let $i_y\colon \Spec k(y)\hookrightarrow Y$ be a point of $Y$ whose $G$-orbit is closed in $Y$ and whose (derived) fiber $X_y$ is finite, reduced, and zero dimensional. 
Then 
\begin{equation}\label{eq:G_deg_eq}
i_y^* \deg^{G_y}f = \sum_{\{G_y\cdot x\subseteq X_y\}} \Tr_{G_x}^{G_y} \deg^{G_x}_x f 
\end{equation}
in $E^0_{G_y}(\Spec k(y))$.
\end{theorem}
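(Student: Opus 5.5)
The plan is to deduce the theorem from Proposition \ref{prop:local_to_global}, applied to the quotient-stack map $f\colon [X/G]\to[Y/G]$ with closed substack $\calZ = [G\cdot y/G]$. Since $G\cdot y = G\times_{G_y}\Spec k(y)$, this substack is $B_{k(y)}G_y$, and the inclusion $i_y\colon B_{k(y)}G_y\to [Y/G]$ is closed because the orbit is closed. Here $i_y^*\deg^{G_y} f$ means the pullback of $\deg^G f$ along $i_y$, which lands in $E^0(B_{k(y)}G_y)=E^0_{G_y}(\Spec k(y))$. I will check that the hypotheses of the proposition pass to the quotient stacks: $f$ is proper, quasi-smooth and representably smoothable (Definition \ref{defn:smoothly_rep}), $L_f$ has virtual rank $0$ because $\dim X=\dim Y$, and the orientation $\rho_f$ is $G$-equivariant.

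Next I identify the derived fiber. We have $[X/G]\times_{[Y/G]} B_{k(y)}G_y \simeq [X_y/G_y]$, and under the finiteness and reducedness hypotheses this decomposes as $\bigsqcup_{G_y\cdot x\subseteq X_y} B_{k(x)}G_x$, where each piece carries its induced derived structure. Each $B_{k(x)}G_x$ is open and closed in the fiber, so Proposition \ref{prop:local_to_global}, using additivity of $E$-cohomology, gives
\[
i_y^*\deg^G f=\sum_{G_y\cdot x} (f_{\calZ'})^{\mathrm{or}}_!(1), \qquad f_{\calZ'}\colon B_{k(x)}G_x\to B_{k(y)}G_y,
\]
where each term is oriented by the restriction of $\rho_f$.

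The remaining step is to identify $(f_{\calZ'})^{\mathrm{or}}_!(1)$ with $\Tr^{G_y}_{G_x}\deg^{G_x}_x f$. The map $f_{\calZ'}$ factors as $f_x\colon B_{k(x)}G_x\to B_{k(y)}G_x$ followed by $i_{G_x}\colon B_{k(y)}G_x\to B_{k(y)}G_y$. The second map is finite étale because $G_x\subseteq G_y$ has finite index, and the canonical orientation of $i_{G_x}$ has $L\simeq 0$. Functoriality of Gysin pushforwards, $(i_{G_x}\circ f_x)_! \simeq i_{G_x!}\circ f_{x!}$, then gives
\[
(f_{\calZ'})^{\mathrm{or}}_!(1)=i_{G_x!}\,(f_x)^{\mathrm{or}}_!(1)=\Tr^{G_y}_{G_x}\deg^{G_x}_x f .
\]
This uses Definition \ref{defn:group_transfer} and Definition \ref{defn:local_G_degree}, where $f_x$ is oriented by $i_x^*\rho_f$.

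I expect the main obstacle to be orientation compatibility. I must check that, under $L_{f_{\calZ'}}\simeq L_{f_x}+f_x^*L_{i_{G_x}}\simeq L_{f_x}$, the orientation pulled back from $\rho_f$ agrees with the composite of $i_x^*\rho_{f}$ and the trivial orientation of $i_{G_x}$. I also need the Gysin transformations of \cite[Theorem 8.4]{KR-generalized-cohomology} to compose compatibly. I would handle this by noting that both orientations are restrictions of $\rho_f$ along the same map $B_{k(x)}G_x\to[X/G]$, and that the Gysin map for an étale morphism is the canonical equivalence $f^*\simeq f^!$. A secondary point is that the fiber decomposition must respect the derived structure; this follows because open-and-closed components of a derived stack are determined by its classical truncation.
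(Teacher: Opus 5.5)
Your proposal is correct and follows essentially the same route as the paper. It applies proper base change (Proposition \ref{prop:local_to_global}) to the fiber $[X_y/G_y]\simeq\bigsqcup B_{k(x)}G_x$, then factors each component map through $B_{k(y)}G_x\to B_{k(y)}G_y$ and uses functoriality of pushforwards to produce $\Tr^{G_y}_{G_x}$. Your explicit check that the restricted orientation agrees with $i_x^*\rho_f$ composed with the trivial orientation of the \'etale map $i_{G_x}$ is exactly what the paper compresses into ``compatibility of the proper pushforward with the transfer.''
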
 

\begin{proof}
    Under the given assumptions, we write 
    \[
    [X_y/G_y] = \bigsqcup_{\{G_y\cdot x\subseteq X_y\}} B_{k(x)}G_x,
    \]
    where $G_y\cdot x$ denotes the $G_y$-orbit of $x: \Spec k(x) \to X_y$. 
    As before, note that we use $k(x)$ to denote the derived scheme whose classical truncation is the residue field at the point of the classical truncation of $X_y$ defined by $x$ . 

    For any $B_{k(x)}G_x\subseteq [X_y/G_y]$, the following diagram commutes  

\begin{equation}\label{diag:big_G_local_global}
    \begin{tikzcd} 
    B_{k(x)}G_x \arrow[swap, hookrightarrow,r,"i_x"] \arrow[d,   swap,"f_x"] & \left[X_y/G_y\right]\cong {\displaystyle\bigsqcup\limits_{G_y \cdot x \subseteq X_y}} B_{k(x)}G_x \arrow[hookrightarrow,"i_{X_y}"]{r} \arrow["f'"]{d} & \left[X/G\right] \arrow["f"]{d} \\ 
    B_{k(y)}G_x \arrow["i_{G_x}"]{r}& B_{k(y)}G_y \arrow[hookrightarrow, "i_y"]{r} & \left[Y/G\right]
    \end{tikzcd} 
\end{equation}
where $f'$ is the pullback of $f$ to $X_y$ and the map $i_{G_x}\colon B_{k(y)}G_x\to B_{k(y)}G_y$ over $\Spec k(y)$ whose proper pushforward is the group transfer $\Tr_{G_x}^{G_y}$ on cohomology that leaves the field of definition $k(y)$ unchanged.  
The right square is cartesian. On the level of underlying schemes, the right square is $G_y$-equivariant, but the left square and the outside square are only $G_x$-equivariant.

Following the proof of Proposition \ref{prop:local_to_global},
\begin{align}\label{eq:big_eqn_in_main_proof}
i_y^*\deg^G(f) & := i_{y}^*f_!^{\mathrm{or}}(1_{[X/G]}) \notag \\ 
&= (f')_!^{\mathrm{or}}(i_{X_y})^*(1_{[X/G]})\notag \\
&= (f'_!)^{\mathrm{or}}(1_{[X_y/G_y]}) \notag\\
&= \sum_{G_y \cdot x \subseteq X_y} (i_{G_x})_!(f_{x})_!^{\mathrm{or}}(1_{B_{k(x)}G_x}) \notag\\
&=\sum_{G_y \cdot x \subseteq X_y} \Tr_{G_x}^{G_y}(f_{x})_!^{\mathrm{or}}(1_{B_{k(x)}G_x}) , 
\end{align}
where the last equality holds by compatibility of the proper pushforward $(f_x)_!^{\mathrm{or}}$ with the transfer $\Tr_{G_x}^{G_y}$, i.e., the proper pushforward in cohomology along $i_{G_x}$. Observe that $$(f_{x})_!^{\mathrm{or}}(1_{B_{k(x)}G_x}) \in E^0(B_{k(y)}G_x),$$ so $\Tr_{G_x}^{G_y}(f_{x})_!^{\mathrm{or}}(1_{B_{k(x)}G_x}) \in E^0(B_{k(y)}G_y)$ for all $B_{k(x)}G_x \subseteq [X_y/G_y]$. 
 By definition, \eqref{eq:big_eqn_in_main_proof} is equal to 
 \[
\sum_{\{G_y\cdot x\subseteq X_y\}} \Tr_{G_x}^{G_y}\deg^{G_x} f_x= \sum_{\{G_y\cdot x\subseteq X_y\}} \Tr_{G_x}^{G_y} \deg^{G_x}_x f,
 \]
 finishing the proof.

\end{proof}

\section{Local Multiplicities and Equivariant B\'{e}zout}\label{section:bezout}

\subsection{Euler number}\label{subsection:euler_number}

In this section, we define the Euler number of a vector bundle on a scalloped stack and show it is a degree. One can view this as the stack theoretic version of \cite[5.4]{bachmann_1-euler_2021}, and is certainly inspired by the constructions of Bachmann--Wickelgren and D\'eglise--Jin--Khan \cite{bachmann_1-euler_2021,DJK}. We will specialize to the equivariant case in Subsection \ref{subsection:G_euler_number}. 

We start by introducing the definition of a relatively oriented vector bundle. 

\begin{definition}\label{defn:rel_oriented_bdle}
Let $E$ be a motivic ring spectrum over a scalloped stack $\calS$. Let $\pi_X: \calX \to \calS$ be a representable stack over $\calS$.
We will say that a vector bundle $p: \calV = \mathbb{V}(\calE) \to \calX$ is \emph{relatively $E$-oriented}, or simply \emph{$E$-oriented}, if there exists an equivalence $E_\calV \otimes \Th_\calV(p^*\calE) \xrightarrow{\simeq} E_\calV \otimes \Th_\calV(p^*L_{\pi_{\calX}})$. This is equivalent to saying that the composite $$\calV \stackrel{p}{\longrightarrow} \calX \stackrel{\pi_\calX}{\longrightarrow} \calS$$ is a relatively oriented morphism. 
\end{definition} 

Definition \ref{defn:rel_oriented_bdle} is analogous to \cite[Defn.~4.15]{brazeuler}.  By homotopy invariance the above definition is equivalent to saying that there is an equivalence 
    \begin{equation} \label{eq:VB orientation}
      \rho_\calE \colon E_\calX \otimes \Th_\calX (\calE) \xrightarrow{\simeq} E_\calX \otimes \Th_\calX (L_{\pi_{\calX}}). 
    \end{equation}
    In particular, the tangent bundle of a smooth representable scalloped stack $\calX$ over $\calS$ is always $E$-oriented for any ring spectrum $E$ over $\calS$.

\begin{definition} \label{defn:Euler-class} 
Let $p: \calV = \mathbb{V}(\calE) \to \calX$ be a vector bundle over a scalloped stack $\calX$. Let $0: \calX \to \calV$ denote its zero section. The \emph{Euler class} of $\calV$ is defined to be the class $$e^E(\calV) = 0^* 0_!(1_\calX)$$  in $E^0(X,\calE)$.  
\end{definition}

    With this notion of orientation and Euler class, we can define the $E$-valued Euler number for relatively $E$-oriented vector bundles.

\begin{definition}\label{defn:euler_number}
Let $\pi_\calX \colon \calX \to \calS$ be a quasi-smooth, proper, representably smoothable scalloped stack over $\calS$. Let $$\calV := \mathbb{V}(\calE)  \stackrel{p}{\longrightarrow} \calX$$ be a vector bundle such that $L_p$ has constant virtual rank $0$. Let $E$ be a motivic ring spectrum over $\calS$. 
Suppose $\calV$ is an $E$-oriented vector bundle oriented by $$\rho_\calE: E_\calX \otimes \Th_\calX (\calE) \xrightarrow{\simeq} E_\calX \otimes \Th_\calX (L_{\pi_{\calX}}).$$ We obtain a composite map on cohomology groups
\begin{equation}\label{eq:euler-num-in-right-degree}
    \begin{tikzcd} 
     E^0(\calX, \calE)  \xrightarrow[\simeq]{\rho_{\calE}} E^0(\calX, L_{\pi_{\calX}})  
     \arrow["\pi_{\calX,!}"]{r} & E^0(\calS). \\
    \end{tikzcd}
\end{equation}
Given any section $\sigma: \calX \to \calV$ of $\calV$, $\sigma^*0_!(1_\calX)\in E^0(\calX, \calE)$. The \emph{Euler number} of an oriented vector bundle $\calV$ with respect to a section $\sigma$ is 
    \begin{equation} \label{eq:euler-number-defn}
    n^E(\calV, \sigma) := \pi_{\calX,!} \rho_\calE (\sigma^*0_!(1_{\calX}))
    \end{equation}
    in $E^0(\calS)$. 
    Note that when $\sigma $ is chosen to be the zero section \eqref{eq:euler-number-defn} becomes $\pi_{\calX,!} \rho_\calE  (e^E(\calV))$ . 
\end{definition}

    Given any section $\sigma: \calX \to \calV$ of a vector bundle $\calV\to \calX$, let $i: Z(\sigma) \hookrightarrow \calX$ denote the inclusion of the derived zero locus $Z(\sigma)$ of $\sigma$ in $\calX$ and let $\pi_{\calZ}: Z(\sigma) \to \calS$ denote the structure map, note $\pi_{\calZ} = \pi_{\calX} \circ i$. 
    If the vector bundle $\calV = \mathbb{V}(\calE)$ is $E$-oriented, the map $\pi_\calZ$ has a canonical relative $E$-orientation given by the equivalence:
    \begin{equation} \label{eq:0locus-orientation}
    E_{Z(\sigma)} \xrightarrow[\simeq]{i^*(\rho_\calE \langle -\calE \rangle)} 
    E_{Z(\sigma)} \otimes \Th_{Z(\sigma)}(i^* L_{\calX/\calS}) \otimes \Th_{Z(\sigma)}(-i^* \calE) \simeq
    E_{Z(\sigma)} \otimes \Th_{Z(\sigma)}(L_{\pi_\calZ}),
    \end{equation}
    where the first equivalence is induced by the $E$-orientation of $i^*\calV$ over $Z(\sigma)$, see \eqref{eq:VB orientation} and last equivalence is induced by the fiber sequence of cotangent complexes 
    $$i^*L_{\calX/\calS} \to L_{Z(\sigma)/\calS} \to L_{i} \simeq i^*\calE[1].$$ 
    Let $\rho_{\calZ} := i^*(\rho_\calE \langle -\calE \rangle)$ denote this induced relative orientation of $\pi_{\calZ}$.
    
    We show Definition \ref{defn:euler_number} is independent of choice of section, as is desired from any Euler number: 

\begin{proposition}[Independence of section] \label{prop:indep.-of-sec}
    Given an $E$-oriented vector bundle $p: \calV \to \calX$ on a quasi-smooth, proper, representably smoothable stack $\pi_\calX \colon \calX \to \calS$ over a scalloped stack $\calS$ such that $L_p$ has constant virtual rank $0$, the Euler number $n^E(\calV,\sigma)$ is independent of the choice of the section $\sigma\colon \calX\to \calV$.
\end{proposition}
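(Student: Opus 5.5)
The plan is to show that $\sigma^*0_!(1_\calX)\in E^0(\calX,\calE)$ does not depend on $\sigma$. Since $n^E(\calV,\sigma)$ is obtained from this class by applying the fixed maps $\rho_\calE$ and $\pi_{\calX,!}$, this already gives the proposition, and it is in fact slightly stronger.

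\textbf{Step 1 (homotopy invariance).} For a section $\sigma\colon\calX\to\calV$, note that $p\circ\sigma=\mathrm{id}_\calX=p\circ 0$. The projection $p\colon\calV\to\calX$ is a vector bundle, so $p^*\colon E(\calX,\calE)\to E(\calV,p^*\calE)$ is an equivalence by homotopy invariance of $\SH$ over scalloped stacks \cite{KR-generalized-cohomology}. Any section $s$ of $p$ satisfies $s^*p^*=\mathrm{id}$, hence $s^*=(p^*)^{-1}$ on twisted $E$-cohomology. Consequently $\sigma^*$ and $0^*$ agree as maps $E^0(\calV,p^*\calE)\to E^0(\calX,\calE)$. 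Here we use the canonical identification $\sigma^*p^*\calE\simeq\calE\simeq 0^*p^*\calE$ of twists, which holds because $p\sigma=p0=\mathrm{id}$.

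\textbf{Step 2 (the class lives in the right group).} We check that $0_!(1_\calX)$ lies in $E^0(\calV,p^*\calE)$. The zero section is a quasi-smooth closed immersion with $L_0\simeq 0^*p^*\calE[1]$, so $-L_0$ corresponds to $\calE$ in $K_0$. The Gysin pushforward of \eqref{eq:Gysin-push}, with the twist by $V$, therefore sends $E(\calX)=E(\calX,L_0-L_0)$ to $E(\calV,-L_0)$, which is identified with $E(\calV,p^*\calE)$ via $p^*$. It then follows that
\[
\sigma^*0_!(1_\calX)=0^*0_!(1_\calX)=e^E(\calV).
\]
Hence $n^E(\calV,\sigma)=\pi_{\calX,!}\rho_\calE(e^E(\calV))$ for every $\sigma$.

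\textbf{Alternative (straight-line homotopy).} To make the argument less dependent on bookkeeping, one can instead use the section $H\colon\calX\times\mathbb{A}^1\to\calV$, $H(x,t)=t\sigma(x)$, of the pulled-back bundle on $\calX\times\mathbb{A}^1$. Restricting $H^*0_!(1)$ to $t=0$ and $t=1$ gives the two classes, and $\mathbb{A}^1$-invariance shows that these restrictions coincide. This reduces to the same homotopy-invariance input.

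\textbf{Main obstacle.} The hardest step is likely making the identifications of twists coherent. One must check that the equivalences $\sigma^*p^*\calE\simeq\calE$ and $0^*p^*\calE\simeq\calE$ are the same, so that the final orientation $\rho_\calE$ is applied compatibly in both cases. This follows from the canonical homotopy $p\sigma\simeq p0$, which is an equality, and naturality of $\Th$ under pullback. The hypotheses of properness, representable smoothability and virtual rank $0$ are needed only to ensure that $\pi_{\calX,!}$ and $0_!$ exist and land in degree $0$ \cite[Theorem~8.4]{KR-generalized-cohomology}.
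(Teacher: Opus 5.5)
Your proposal is correct and is essentially the paper's argument: homotopy invariance makes $p^*$ an equivalence, so every section acts as $(p^*)^{-1}$, which gives $\sigma^*0_!(1_\calX)=0^*0_!(1_\calX)=e^E(\calV)$ and hence $n^E(\calV,\sigma)=\pi_{\calX,!}\rho_\calE(e^E(\calV))$ for every $\sigma$. Your version is slightly more careful than the paper's, which states $\sigma^*=0^*$ on untwisted $E^0$ with the arrow written backwards, whereas you correctly work with the twisted map $E^0(\calV,p^*\calE)\to E^0(\calX,\calE)$.
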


\begin{proof}
    Let $\calV = \mathbb{V}(\calE)$ and let $\rho_\calE$ denote the orientation of the vector bundle $\calV$.
    By homotopy invariance, $$p^*: E^0(\calX) \to E^0(\calV)$$ is an equivalence. 
    Since $(p0)^* = (p\sigma)^*$ is the identity, homotopy invariance implies
    \[
    \sigma^* = 0^*: E^0(\calX) \to E^0(\calV).
    \]
    Hence $n^E(\calV,\sigma)$ becomes $\pi_{\calX,!}\rho_\calE (e^E(\calV))$
    for any section  $\sigma\colon \calX\to \calV$. This shows $n^E(\calV,\sigma)$ is independent of the choice of $\sigma$.  
\end{proof}

    Finally, we show that the Euler number of a vector bundle $\calV$ with respect to a section $\sigma$ agrees with the usual geometric notion that $n^E(\calV, \sigma)$ should give an oriented count of zeros of $\sigma$, given by the degree of $Z(\sigma)$.

    \begin{lemma}\label{lem:equiv-defns-of-euler-number}
        Let $E$ be a motivic ring spectrum over a scalloped stack $\calS$. Assume that $\pi_{\calX}\colon \calX \to \calS$ is a proper, quasi-smooth, representably smoothable, stack over $\calS$. Assume $p\colon \calV\to \calX$ is an $E$-oriented vector bundle such that $L_p$ has constant virtual rank $0$, and let $\sigma$ be a section of $\calV$. 
    Then 
    \[
    n^E(\calV,\sigma)=\deg(Z(\sigma)) 
    \] in $E^0(\calS)$, where $\deg(Z(\sigma)) :=(\pi_\calZ^{\mathrm{or}})_!(1_{Z(\sigma)})$ and $\pi_\calZ: Z(\sigma) \to \calS$ has the induced relative orientation. 
    \end{lemma}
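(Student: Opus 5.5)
The plan is to identify $\sigma^*0_!(1_\calX)$ with $i_!(1_{Z(\sigma)})$, twisted appropriately, and then use functoriality of Gysin pushforwards, $\pi_{\calX,!}\circ i_! \simeq \pi_{\calZ,!}$, together with compatibility of orientations. Here $Z(\sigma)$ is the derived fiber product of $\sigma$ and the zero section $0$:
\[
\begin{tikzcd}
Z(\sigma) \arrow[r, "i"] \arrow[d, "i"'] & \calX \arrow[d, "0"] \\
\calX \arrow[r, "\sigma"'] & \calV.
\end{tikzcd}
\]
The zero section $0$ is a quasi-smooth closed immersion with $L_0 \simeq \calE[1]$. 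It is proper and representably smoothable, since it is finite unramified. It therefore has a Gysin pushforward
\[
0_!\colon E(\calX, L_0 + 0^*V) \to E(\calV, V).
\]

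First I would apply base change for Gysin maps along this homotopy cartesian square, using the quasi-smooth proper base change \cite[Theorem 6.1.ii]{KR-generalized-cohomology} as in the proof of Proposition \ref{prop:local_to_global}. This gives
\[
\sigma^*0_!(1_\calX) = i_!(1_{Z(\sigma)}) \in E^0(\calX, \calE),
\]
where $i_!\colon E^0(Z(\sigma), L_i + i^*\calE) \to E^0(\calX,\calE)$ and $L_i \simeq i^*L_0 \simeq i^*\calE[1]$. Note that $1_{Z(\sigma)}$ lives in the correct group, because in $K_0$ we have $L_i + i^*\calE = -i^*\calE + i^*\calE = 0$.

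Next I would move the orientation past $i_!$. The twist $\rho_\calE$ is an equivalence of $E_\calX$-modules, and $i_!$ is linear over pullback (projection formula, naturality of Gysin maps in twists). Hence
\[
\rho_\calE\, i_!(1_{Z(\sigma)}) = i_!\big(i^*\rho_\calE\langle -\calE\rangle (1_{Z(\sigma)})\big) = i_!(\rho_\calZ(1_{Z(\sigma)})),
\]
where now $i_!\colon E^0(Z(\sigma), L_i + i^*L_{\pi_\calX}) \to E^0(\calX, L_{\pi_\calX})$. This is exactly where the definition \eqref{eq:0locus-orientation} of $\rho_\calZ$ is used. One must check that the identification $\Th(L_{\pi_\calZ}) \simeq \Th(i^*L_{\pi_\calX}) \otimes \Th(L_i)$ coming from the cofiber sequence $i^*L_{\calX/\calS}\to L_{Z/\calS}\to L_i$ is the same one used in $\rho_\calZ$.

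Finally I would apply functoriality of Gysin transformations for the composite $\pi_\calZ = \pi_\calX\circ i$. By \cite[Theorem 8.4]{KR-generalized-cohomology}, $\mathrm{gys}_{\pi_\calX \circ i}$ is identified with the composite of $\mathrm{gys}_i$ and $\mathrm{gys}_{\pi_\calX}$, under the same $K$-theoretic identification of Thom twists. This gives $\pi_{\calX,!}\, i_! = \pi_{\calZ,!}$, and therefore
\[
n^E(\calV,\sigma) = \pi_{\calZ,!}\rho_\calZ(1_{Z(\sigma)}) = \deg(Z(\sigma)).
\]

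The main obstacle is bookkeeping of the twist identifications: making sure that the equivalences used in base change, in the projection formula, and in functoriality of Gysin maps all agree with the one defining $\rho_\calZ$. I would handle this by working throughout with the canonical map $\Th\colon K(\calX)\to \op{Pic}(\SH(\calX))$, so that every identification comes from the same fiber sequence of cotangent complexes.
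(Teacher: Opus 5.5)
Your proposal is correct and follows the paper's proof: base change along the derived zero-locus square gives $\sigma^*0_!(1_\calX)=i_!(1_{Z(\sigma)})$, and the remaining identity $\rho_\calE\, i_! = i_!\,\rho_\calZ$ comes from the projection formula and compatibility of Thom twists with the six operations, as in the paper. Your explicit appeal to functoriality of Gysin maps for $\pi_\calZ=\pi_\calX\circ i$ only spells out the step the paper uses implicitly when it writes $\pi_{\calZ,!}=\pi_{\calX,!}\,i_!$.
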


    \begin{proof}
    Let $\calE$ denote the locally free sheaf on $\calX$ defining the vector bundle $\calV$, i.e., $\calV = \mathbb{V}(\calE)$ and let $i: Z(\sigma) \to \calX$ denote the inclusion of the derived locus. 
    Let $\rho_\calE$ denote the orientation of the vector bundle $\calV$ and $\rho_\calZ:= i^*(\rho_\calE \langle -\calE \rangle)$ denote the induced relative orientation on $Z(\sigma)$ as in \eqref{eq:0locus-orientation}. 

    Consider the diagram below, where the right square is homotopy cartesian 
    \begin{equation}\label{diag:section-independence}
    \begin{tikzcd}
        & Z(\sigma) \arrow[dr, phantom, "\lrcorner", very near start] \arrow["i"]{r} \arrow[swap,"i"]{d} \arrow[swap,  "\pi_{\mathcal{Z}}"]{dl}& \mathcal{X} \arrow{d}{0} \\
         \mathcal{S} & \mathcal{X} \arrow{l}{\pi_{\calX}} \arrow{r}{\sigma} & \mathcal{V} ,
    \end{tikzcd}
    \end{equation}
    and $\pi_\calZ = \pi_\calX \circ i$.

By base change formula for the above square, we get $\sigma^*0_!(1_\calX) = i_!i^*(1_\calX) = i_!(1_{Z(\sigma)})$.
Combined with \eqref{eq:euler-number-defn}, 
    $$
    n^E(\calV,\sigma) := \pi_{\calX,!} \rho_\calE (\sigma^*0_!(1_{\calX})) = \pi_{\calX,!} \rho_\calE (i_!(1_{Z(\sigma)})),
    $$
Recall from \eqref{eq:oriented_pushforward}
    $$
    \deg(Z(\sigma)) :=(\pi_\calZ^{\mathrm{or}})_!(1_{Z(\sigma)}) = \pi_{\calZ,!} \rho_\calZ (1_{Z(\sigma)}) = \pi_{\calX,!} i_! \rho_\calZ (1_{Z(\sigma)}).
    $$
 Therefore, the required equality is reduced to proving
 \[
 \rho_\calE i_! = i_! \rho_\calZ = i_! (i^* \rho_\calE \langle -\calE \rangle).
 \]
This holds by the construction of Gysin pushforwards on cohomology using the six operations (see \eqref{eq:Gysin-push}),  the compatibility of Thom twists with the operations, and the projection formula for $i_!$.
 \end{proof}

 We denote the Euler number henceforth by $n^E(\calV)$ without reference to a section $\sigma$ in light of Proposition \ref{prop:indep.-of-sec}.

In the case when $X$ is a $G$-scheme, $V$ is an equivariant bundle on $X$, and $\sigma\colon X\to V$ is an equivariant section, Definition \ref{defn:euler_number} does not require the section $\sigma$ to have simple \emph{and} isolated zeros. This is discussed further in Subsection \ref{subsection:G_euler_number}. See Definition \ref{defn:isolated_simple_zeros} for the definition of simple and isolated zeros in the case of schemes.

\subsubsection{Intersection product}
Let $\calX$ be a derived scalloped stack representable over a scalloped stack $\calS$. Let $i_1: \calZ_1 \to \calX$, $i_2: \calZ_2 \to \calX$ be quasi-smooth, relatively $E$-oriented, closed substacks of $\calX$. Then for any motivic ring spectrum $E$ over $\calS$, recall that the $E$-intersection product of $\calZ_1$ and $\calZ_2$ in $\calX$ is defined as the $E$-cohomology class 
\[
i_!^{\mathrm{or}}(1_{\calZ}) \in E^0(\calX),
\]
where $i: \calZ := \calZ_1 \times^\mathbf{R}_\calX \calZ_2 \to \calX$ denotes the inclusion of the derived intersection of $\calZ_1$ and $\calZ_2$ in $\calX$, 
\begin{center}
    \begin{tikzcd}
        \calZ := \calZ_1 \times^\mathbf{R}_\calX \calZ_2 \arrow{r}\arrow{d} \arrow["i", dashed]{dr} & \calZ_1 \arrow["i_1"]{d}\\
        \calZ_2 \arrow[swap, "i_2"]{r}& \calX. 
    \end{tikzcd}
\end{center}
The orientation of $\calZ$ is inherited from the orientations of $\calZ_1$ and $\calZ_2$. It follows from the proof of \cite[Thm.~3.22]{khan-VFC} that 
\[
i_!^{\mathrm{or}}(1_{\calZ}) = i_{1,!}^{\mathrm{or}}(1_{\calZ_1}) \cdot i_{2,!}^{\mathrm{or}}(1_{\calZ_2}).
\]
\begin{definition}\label{defn:intersection_number}Let $\pi\colon \calX \to \calS$ be a relatively $E$-oriented, proper, quasi-smooth, representably smoothable morphism of scalloped stacks, and $i_1: \calZ_1 \to \calX$, $i_2: \calZ_2 \to \calX$ be quasi-smooth, relatively $E$-oriented closed substacks of $X$. Let $\calZ := \calZ_1 \times^\mathbf{R}_\calX \calZ_2$ be the intersection and $\iota:= \pi i$ the structure map, $\iota\colon \calZ\to \calS$. 
We define the {\it{$E$-intersection number}} of $\calZ_1$ and $\calZ_2$ in $\calX$ to be the $E$-degree $\deg \iota$ of the map $\iota = \pi i: \calZ \to \calS$ in $E^0(\calS)$. \end{definition}

Note that this coincides with the degree of the intersection product class, and therefore recovers the definition of intersection number in the usual sense whenever both are defined and we work with canonical orientations. 

Suppose $\calZ_1$ and $\calZ_2$ are given by the vanishing locus of sections $s_1, s_2$ of $E$-oriented vector bundles $\mathbb{V}(\calE_1), \mathbb{V}(\calE_2)$, respectively, over a proper, quasi-smooth, representably smoothable, derived stack $\calX$ over the scalloped stack $\calS$. Then the derived intersection $\calZ$ is the derived vanishing locus of the section $(s_1, s_2)$ of the vector bundle $V:=\mathbb{V}(\calE_1 \oplus \calE_2)$ over $\calX$. Therefore the $E$-intersection number of $\calZ_1$ and $\calZ_2$ in $\calX$ is the Euler number 
\begin{equation} \label{eq:int-no.=Euler-no.}
    n^E(V),
\end{equation}
and is independent of the choice of sections $s_1$ and $s_2$ by Proposition~\ref{prop:indep.-of-sec}.

\subsection{Equivariant Euler number and B\'{e}zout's Theorem}\label{subsection:G_euler_number} 

In this section, we assume $S$ is the spectrum of a field $k$ and work relative to the base $\calS = BG$ (over $S$), the classifying stack of a finite abstract group $G$ of order invertible in $k$. Let $E_G \in \SH^G(S) = \SH(\calS)$ be a motivic ring spectrum. We now specialize the non-transverse Euler number of Subsection \ref{subsection:euler_number} to the $G$-equivariant case. When we assume a section has only \emph{simple} isolated zeros and $E$ is an oriented ring spectrum, the equivariant Euler number has a formula in terms of transfers of $1$. See Proposition \ref{prop:Euler-number=local_deg}. This is analogous to \cite[Lemma 5.21]{brazeuler}. When a section has isolated zeros that are not necessarily simple, the Euler number is still a sum of transfers of local degrees, see Theorem \ref{thm:non_transverse_G_euler=local_degrees}. 

 $X$ will be a $G$-equivariant proper, quasi-smooth, equivariantly smoothable algebraic space over $S$ throughout this section, and in fact will often be proper and smooth. Note that $[X/G]\to \calS$ is representable. For an $E_G$-oriented equivariant vector bundle $V \to X$, we denote the equivariant Euler class $e^E([V/G])$ of $V$ by $e^{E_G}(V)$ and its Euler number by $n^{E_G}(V)$.  

\begin{definition} \label{defn:isolated_simple_zeros}
Recall from \cite[Defn.~22]{27lines} the definition of isolated and simple zeros of a section of a vector bundle. Given a vector bundle $p:V \to X$ and a section $\sigma: X \to V$ on a smooth scheme $X$, let $Z = \pi_0Z(\sigma)$ denote the underived zero locus of $\sigma$. A point $x \in Z$ is said to be an {\it isolated zero} if $\calO_{Z,x}$ is a finite $k$-algebra. An isolated zero is {\it simple} if the Jacobian determinant of $\sigma$ at $x$ is non-vanishing (i.e.,$k \hookrightarrow k(x)$ is a finite, separable field extension). A section $\sigma$ is said to have \emph{isolated and simple zeros} if it has finitely many zeros and all its zeros are isolated and simple. This is equivalent to saying that $Z(\sigma)$ consists of finitely many closed points \cite[Proposition 23]{27lines}. 
\end{definition}

Note that if $p: V \to X$ is an equivariant vector bundle and $\sigma$ is an equivariant section with isolated and simple zeros, then $Z(\sigma)$ can be written as a disjoint union of $G$-invariant orbits $$\coprod_{\{G\cdot x_i \subseteq Z(\sigma)\}} Gx_i,$$ where each $x_i$ is an isolated simple zero of $\sigma$.

\begin{definition}\label{defn:compatibly_oriented_zeros}
    Let $E_G\in \SH^G(S)$ be a motivic ring spectrum over a base $S$. Let $p\colon V\to X$ be a relatively $E_G$-oriented $G$-equivariant vector bundle on a algebraic space $X$ with $G$-action over $S$. Let $\sigma$ be an equivariant section of $V\to X$. We say $\sigma\colon X\to V$ is \emph{compatibly $E_G$-oriented} if the the induced orientations on the zeros $Z(\sigma)$ are canonical, meaning the orientation map on each component of $Z(\sigma)$ is identity. 
\end{definition}

\begin{proposition} \label{prop:Euler-number=local_deg}
    Let $G$ be a finite group which is tame over $k$. Let $X$ be a proper, smooth algebraic space over $S= \Spec(k)$ 
    with $G$-action and $p: V \to X$ be an $E_G$-oriented equivariant vector bundle on $X$ such that $L_p$ has constant virtual rank 0. Suppose $p$ has an equivariant section $\sigma$ with isolated and simple zeros $\coprod Gx_i$. 
    
    Let $E_G \in \SH^G(S)$ be any motivic ring spectrum. If $\sigma$ is compatibly $E_G$-oriented, then the equivariant Euler number with respect to $\sigma$ can be computed as the sum of local indices:
    \[ 
    n^{E_G}(V, \sigma) = \sum_{\{G\cdot x_i \subseteq Z(\sigma)\}} \Tr^G_{G_{x_i}} \Tr_{k(x_i)/k} (1_{G_{x_i}})
    \]
   in $E_G^0(S)$, where $G_{x_i} \subseteq G$ denotes the stabilizer group of $x_i \in Z(\sigma)$.
  By independence of section (Proposition~\ref{prop:indep.-of-sec}), this computes $n^{E_G}(V)$.
\end{proposition}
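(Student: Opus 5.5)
We will reduce the statement to the degree of the zero locus and then decompose this locus orbit by orbit. By Lemma~\ref{lem:equiv-defns-of-euler-number}, applied with $\calS = BG$, $\calX = [X/G]$ and $\calV = [V/G]$, we have
\[
n^{E_G}(V,\sigma) = \deg(Z(\sigma)) = (\pi_\calZ)^{\mathrm{or}}_!(1_{Z(\sigma)})
\]
in $E^0(BG) = E^0_G(S)$, where $\pi_\calZ$ carries the induced orientation $\rho_\calZ$ of \eqref{eq:0locus-orientation}. Since $\sigma$ has isolated and simple zeros, Definition~\ref{defn:isolated_simple_zeros} says the derived zero locus is classical and étale over $k$. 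Concretely, $Z(\sigma)$ is a finite disjoint union of spectra of finite separable extensions $k(x)$: the simplicity (Jacobian) condition makes $L_{Z(\sigma)/k}\simeq 0$, so no derived structure survives. Passing to quotients, this gives
\[
[Z(\sigma)/G] \simeq \bigsqcup_{\{G\cdot x_i\subseteq Z(\sigma)\}} B_{k(x_i)}G_{x_i},
\]
exactly as in the proof of Theorem~\ref{thm:local_global_for_schemes}, using $[G\cdot x/G] = [\Spec k(x)/G_x]$.

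Next we use additivity of $E$-cohomology, as in Proposition~\ref{prop:local_to_global}. The pushforward $(\pi_\calZ)_!$ splits as a sum over the components, and on the component indexed by $x_i$ the structure map factors as
\[
B_{k(x_i)}G_{x_i} \xrightarrow{\ i_{L}\ } B_kG_{x_i} \xrightarrow{\ i_{G_{x_i}}\ } B_kG.
\]
The first map is a finite étale change of field and the second is a change of group. Both are étale, so their cotangent complexes vanish and they are canonically oriented. By functoriality of proper pushforward, $(f\circ g)_! = f_! g_!$, the pushforward along this composite is $\Tr^G_{G_{x_i}}\Tr_{k(x_i)/k}$ in the sense of Definitions~\ref{defn:group_transfer} and~\ref{defn:field_transfer}. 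Here we must also check that the Gysin transformation of an étale map is the canonical identification $f^*\simeq f^!$, so that the pushforward with twist $L_f=0$ really is the transfer.

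The remaining point is the orientation. On each component, the restriction of $\rho_\calZ$ is an automorphism of $E_{B_{k(x_i)}G_{x_i}}$, since $L_{\pi_\calZ}\simeq 0$ there. If this automorphism were nontrivial, it would multiply $1$ by a unit, and the term would become the transfer of that unit rather than the transfer of $1$. The hypothesis that $\sigma$ is compatibly $E_G$-oriented (Definition~\ref{defn:compatibly_oriented_zeros}) says precisely that this automorphism is the identity. Hence each summand equals $\Tr^G_{G_{x_i}}\Tr_{k(x_i)/k}(1_{G_{x_i}})$, and summing over orbits gives the formula. The final assertion that this computes $n^{E_G}(V)$ follows from Proposition~\ref{prop:indep.-of-sec}.

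I expect the main obstacle to be the identification in the second step: the canonical trivialization $L_{\pi_\calZ}\simeq 0$ used in the compatibility definition must match the identification coming from the fiber sequence $i^*L_X\to L_Z\to i^*\calE[1]$. Here the Jacobian supplies the isomorphism $i^*L_X\simeq i^*\calE$. I would handle this by stating that the compatibility condition is interpreted with respect to this trivialization, and then appealing to the étale case of the Gysin map.
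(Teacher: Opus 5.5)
Your proposal is correct and follows essentially the paper's argument: base change along the square exhibiting $Z(\sigma)$ as the fiber of the zero section (which you reach through Lemma~\ref{lem:equiv-defns-of-euler-number}), then the decomposition $[Z(\sigma)/G]\simeq\bigsqcup B_{k(x_i)}G_{x_i}$, then the factorization $B_{k(x_i)}G_{x_i}\to B_kG_{x_i}\to BG$, which produces $\Tr^G_{G_{x_i}}\Tr_{k(x_i)/k}$. You are more explicit than the paper about where the compatible-orientation hypothesis and the \'etale Gysin identification enter, and reading the first pushforward directly as $\Tr_{k(x_i)/k}$ via Definition~\ref{defn:field_transfer} sidesteps the paper's remark that this map is a base change of $\Spec k(x_i)\to\Spec k$, which only holds when $G_{x_i}$ acts trivially on $k(x_i)$.
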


\begin{proof}
 It follows from the base change formula applied to the following equivariant homotopy cartesian square:
\[
    \begin{tikzcd}
        \underset{\{G\cdot x_i \subseteq Z(\sigma)\}} {\coprod} Gx_i \arrow{r} \arrow{d} & X \arrow{d}{\sigma} \\
        X \arrow{r}{0} & V.
    \end{tikzcd}
    \]
Since  $B_{k(x)}G_{x} \to BG$ coincides with the composite $$B_{k(x)}G_{x} \to BG_{x} \to BG,$$ we get
\[
n^{E_G}(V) = \sum_{\{G\cdot x_i \subseteq Z(\sigma)\}} \Tr^G_{G_{x_i}} \iota_!(1),
\]
 where $\iota: B_{k(x_i)}G_{x_i} \to BG_{x_i}$ denotes the natural map.
Now note that $\iota$ is a base change of the map $\Spec(k(x)) \to \Spec(k)$ along $BG \to \Spec(k)$. 
\end{proof}

\begin{remark}
    When a section $\sigma$ has isolated but not necessarily simple zeros, a computation of $n^{E_G}(V)$ in terms of zeros of $\sigma$ is still possible using local degrees, see Theorem \ref{thm:non_transverse_G_euler=local_degrees} below. However, the local degrees appearing in Theorem \ref{thm:non_transverse_G_euler=local_degrees} may not be $1_{G_{x_i}}\in E^0_{G_{x_i}}(B_{k(x_i)}G_{x_i})$ even when orientations are canonical. 
\end{remark}

\subsubsection{Euler number as sum of local degrees}
In this Subsection, we expand on the case when the equivariant Euler number is computed using a section with isolated zeros that are not necessarily simple. The main application is Theorem \ref{thm:bezout}, which can be thought of as an equivariant non-transverse B\'ezout's theorem. 

\begin{construction} \label{construction:map_f_for_euler}
Let $V := \mathbb{V}(\calE) \to X$ be a $G$-equivariant bundle over a $G$-equivariant proper, smooth, algebraic space $X$ over $S = \Spec(k)$.
Let $\sigma$ be a $G$-equivariant section such that $\sigma \in \tilde{U} \subseteq \Gamma (X,V)$, where $\tilde{U}$ is a finite dimensional $G$-invariant $k$-vector subspace of $\Gamma (X,V)$. Let $U$ denote the affine scheme $\Spec(\Sym_k(\tilde{U}^\vee))$.

Let $Z=\{(\sigma, p)\in U\times X\colon \sigma(p)=0\}$ be defined as the derived pullback in the following homotopy cartesian square:
\begin{equation} \label{eq:univ.-0locus-orientation}
  \begin{tikzcd}
        Z \arrow{r} \arrow{d}{i_Z} & X \arrow{d}{0}  \\
        U \times X \arrow{r}{ev} & V,
    \end{tikzcd}  
\end{equation}
where the bottom horizontal map is the evaluation map $(\sigma, p)\mapsto \sigma(p)$ and the right vertical map is the zero section. Denote by $f: Z \to U$ the map induced by the left vertical map composed with the projection $p_U: U \times X \to U$. 
\end{construction} 

Now suppose $V \to X$ is $E_G$-oriented for some $E_G \in \SH^G(k)$ motivic ring  spectrum $E_G$ in $\SH^G(S)$, $S=\Spec (k)$. Then there is an induced orientation on $f: Z \to U$ described below.
We can pull $V$ back to $U\times X$ along the projection $p_X\colon U\times X\to X$ to obtain a vector bundle $p_X^*V\to U\times X$. The evaluation map defines a section of $p_X^*V\to U\times X$, and $i_Z: Z \to U \times X$ is the derived zero locus of this section. 
Therefore $L_{i_Z} \simeq i_Z^* p_X^* \calE[1]$.
Since $f = p_U \circ i_Z$, this equivalence gives a fiber sequence of cotangent complexes:
\[
i_Z^* p_X^* L_X \to L_f \to i_Z^* p_X^* \calE[1].
\]
The relative orientation of $f$ is given by
\begin{equation}\label{eq:rel_or_of_f}
E_G (Z,L_f) \simeq E_G(Z, i_Z^* p_X^* (L_X - \calE)) \simeq E_G(Z),
\end{equation}
where the last equivalence is given by the pullback to $Z$ of 
the orientation \eqref{eq:VB orientation} of the vector bundle $V \to X$.

Note that $Z$ is the universal zero locus for sections contained in $U$, i.e., for any section $\sigma \in U$, we have a homotopy cartesian square:
\begin{equation} \label{eq:0locus-univ.pullback}
    \begin{tikzcd}
        Z(\sigma) \arrow{r} \arrow{d} & Z \arrow{d}{f}  \\
        \Spec(k)  \arrow{r}{\sigma} & U,
    \end{tikzcd}
\end{equation}
where $Z(\sigma)$ denotes the derived zero locus of the section $\sigma$. Thus $Z(\sigma) \to \Spec(k)$ has the pullback $E_G$-orientation induced from the relative orientation of $f$. The following theorem shows that the equivariant Euler number of $V$ with respect to a section $\sigma$ can be computed in terms of the equivariant local degree of the map $f$ at the rational point defined by $\sigma$ in $U$.

\begin{theorem}\label{thm:non_transverse_G_euler=local_degrees}
    Let $G$ be a finite group that is tame over $k$, and let $E_G\in \SH^G(S)$ be a motivic ring spectrum, $S=\Spec(k)$. Let $p\colon V \to X$ be an $E_G$-oriented vector bundle over a smooth, proper algebraic space $X$ with $G$ action over $\Spec k$. Assume $L_p$ has constant virtual rank 0.  Let $\sigma$ be a $G$-equivariant section with finitely many isolated zeros, and assume there exists a finite dimensional $G$-equivariant $k$-vector subspace  of $\Gamma(X,V)$ containing $\sigma$. 
    Let $f$ be as in Construction \ref{construction:map_f_for_euler} with the relative orientation described above in \eqref{eq:rel_or_of_f}. Then we have an equality in $E^0_G(S)$,
    \[
    n^{E_G}(V,\sigma) = \underset{\{G\cdot x_i \subseteq Z(\sigma)\}}{\sum} \Tr_{G_{x_i}}^G  \deg_{x_i}^{G_{x_i}}(f).
    \] 
\end{theorem}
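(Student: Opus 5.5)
The strategy is to rewrite $n^{E_G}(V,\sigma)$ as the $E_G$-degree of the derived zero locus $Z(\sigma)$, realise $Z(\sigma)$ as the fiber of $f$ over the rational point $\sigma$ of $U$, and then apply the local-to-global formula. Throughout, all objects are read as quotient stacks by $G$ over $\calS = BG$.

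\textbf{Step 1.} By Lemma \ref{lem:equiv-defns-of-euler-number}, $n^{E_G}(V,\sigma)=\deg(Z(\sigma))=(\pi_{\calZ})^{\mathrm{or}}_!(1_{Z(\sigma)})$. Here $\pi_\calZ$ carries the orientation $\rho_\calZ = i^*(\rho_\calE\langle -\calE\rangle)$ of \eqref{eq:0locus-orientation}.

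\textbf{Step 2.} Since $\sigma$ is $G$-equivariant, it is a $G$-fixed vector of $\tilde U$. So $\sigma\colon \Spec k\to U$ is a $G$-equivariant closed point with stabilizer $G$, and $[\Spec k/G]=BG\to[U/G]$ is a closed substack with closed orbit. The square \eqref{eq:0locus-univ.pullback} is then a homotopy cartesian square of $G$-stacks. This holds because the pasting of \eqref{eq:univ.-0locus-orientation} with the pullback of $U\times X\to U$ along $\sigma$ recovers the square defining $Z(\sigma)$, where $ev\circ(\sigma\times\mathrm{id}) = \sigma$.

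We then check that the two orientations on $Z(\sigma)\to \Spec k$ agree: the one pulled back from \eqref{eq:rel_or_of_f} and $\rho_\calZ$. Both are obtained from $\rho_\calE$ by pulling back along $Z(\sigma)\to X$ and using the cotangent fiber sequence $L_X\to L\to \calE[1]$. The identification $L_{i_Z}\simeq i_Z^*p_X^*\calE[1]$ restricts along $\sigma$ to $L_i\simeq i^*\calE[1]$ by base change of cotangent complexes. I expect this compatibility of orientations, naturality of the fiber-sequence identifications in $K_0$ under derived base change, to be the main technical point. I would handle it by noting that both identifications come from the same homotopy cartesian diagram, so the induced Thom-spectrum equivalences agree by functoriality of $\Th$ on $K$-theory.

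By Proposition \ref{prop:oriented_base_change} we get $\sigma^*\deg^G f=\deg(Z(\sigma))$ in $E^0_G(\Spec k)=E^0(BG)$. Alternatively, one can apply Proposition \ref{prop:local_to_global} with $\calZ=BG\hookrightarrow[U/G]$.

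\textbf{Step 3.} Since the zeros are isolated and finite in number, the fiber $Z(\sigma)$ is a finite zero-dimensional derived scheme. Its classical truncation is a disjoint union of local Artinian components supported at the points $x_i$. Hence $[Z(\sigma)/G]$ decomposes as a disjoint union, over $G$-orbits $G\cdot x_i$, of the open-closed substacks $[\,G\cdot Z_{x_i}/G]\simeq[Z_{x_i}/G_{x_i}]$. Here $Z_{x_i}$ is the derived component at $x_i$, and we use the abuse of notation $B_{k(x_i)}G_{x_i}$ of Subsection \ref{subsection:degree_for_schemes}.

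Additivity then gives $\deg(Z(\sigma))=\sum_i \deg_{Z_{x_i}} f$, with each summand computed in $E^0(BG)$. For each summand we factor the map $[Z_{x_i}/G_{x_i}]\to BG$ as $[Z_{x_i}/G_{x_i}]\xrightarrow{f_{x_i}} B_kG_{x_i}\xrightarrow{i_{G_{x_i}}} BG$, exactly as in diagram \eqref{diag:big_G_local_global} with $k(y)=k$ and $G_y=G$. Functoriality of proper pushforward then yields $\Tr^G_{G_{x_i}}\deg^{G_{x_i}}_{x_i} f$.

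This is the proof of Theorem \ref{thm:local_global_for_schemes}, except that the fiber is not reduced. That proof used only the disjoint-union decomposition, additivity and functoriality, never reducedness, so I would rerun it verbatim with the derived components. Combining Steps 1 through 3 gives the stated equality.
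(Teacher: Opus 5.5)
Your proposal is correct and follows the paper's proof. You identify $n^{E_G}(V,\sigma)$ with $\sigma^*\deg^G f$ via the orientation compatibility and oriented base change along \eqref{eq:0locus-univ.pullback}, then run the local-to-global argument of Theorem \ref{thm:local_global_for_schemes} with $G_y=G$ and $k(y)=k$. Your observation that the reducedness hypothesis of that theorem fails for non-simple zeros, but is never used in its proof, is a worthwhile refinement: the paper instead cites that theorem directly.
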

\begin{proof}
 Note that the $E_G$-orientation of the zero locus $Z(\sigma)$ (over $\Spec(k)$) of the section $\sigma$ defined in \eqref{eq:0locus-orientation}  coincides with the pull back of the relative orientation of $f$ defined in \eqref{eq:univ.-0locus-orientation} under the pullback diagram \eqref{eq:0locus-univ.pullback}. 

By oriented base change (Proposition \ref{prop:oriented_base_change}) applied to the square  \eqref{eq:0locus-univ.pullback} and the definitions of degree and Euler class (Definitions \ref{defn:global_degree} and \ref{defn:Euler-class} respectively) we get
\[
n^{E_G}(V, \sigma) = \sigma^*(\deg^G(f)).
\]
Now by local to global degree Theorem~\ref{thm:local_global_for_schemes}, we have the required equality.
\end{proof}

We obtain the following version of Bézout's theorem for the (possibly non-transverse) intersection of the derived zero locus of sections of equivariant oriented vector bundles over an algebraic space $X$ with $G$-action.

   \begin{theorem}[Equivariant non-transverse B\'ezout's theorem]\label{thm:bezout} Let $k$ be a field, $G$ be a finite group that is tame over $k$, $S=\Spec(k)$, and  $E_G \in \SH^G(S)$ be an equivariant motivic ring spectrum. Let $X$ be a proper, smooth algebraic space with $G$-action over $S$. 
   Let $\calE_1, \ldots, \calE_n$ be $E_G$-oriented $G$-equivariant vector bundles on $X$.  Assume $\op{rank} \mathbb{V}(\oplus_{i=1}^n\calE_i) = \dim X$. Let $s_1, \ldots s_n$ be $G$-equivariant sections of $\calE_1, \ldots, \calE_n$, respectively.  Let $Z_1, \ldots, Z_n$ denote the derived vanishing loci of the sections $s_1, \ldots, s_n$ respectively. 
   
   The $E_G$-intersection number of $Z_1, \ldots, Z_n$ is given by $n^{E_G}(\mathbb{V}(\oplus_{i=1}^n\calE_i))$, and is independent of choice of section. If $\sigma$ is a section of $\mathbb{V}(\oplus_{i=1}^n\calE_i)$  with isolated, but not necessarily simple, zeros $Z(\sigma)=\{x_1, \cdots, x_k\}$, then the $E_G$-intersection number can be computed as
   \[
   \sum_{\{G\cdot x_i\subseteq Z(\sigma)\}} \Tr^G_{G_{x_i}} \deg_{x_i}^{G_{x_i}}f
   \]
   in $E^0_G(S)$, where $f$ is as in Construction \ref{construction:map_f_for_euler} and $G_{x}$ denotes the $G$-stabilizer at a point $x \in X$. When $Z(\sigma)$ consists of zeros that are both isolated and simple and $\sigma$ is compatibly $E_G$-oriented, the $E_G$-intersection number can be computed as
   \[
   \sum_{\{G\cdot x_i\subseteq Z(\sigma)\}} \Tr^G_{G_{x_i}} \Tr_{k(x_i)/k} (1_{G_{x_i}})
   \]
   in $E^0_G(S)$.  
   \end{theorem}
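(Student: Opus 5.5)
The theorem is essentially an assembly of results already proved, applied to the quotient $\calX=[X/G]$ over the base $\calS=BG$. I would proceed in three steps.

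\emph{Step 1: the intersection number is the Euler number.} Work over $\calS = BG$ with $\calX=[X/G]$. This is proper and smooth, hence quasi-smooth, over $\calS$. It is also representably smoothable, because the structure map is itself smooth representable: take the trivial factorization with $i$ the identity. Each $Z_j$ is the derived vanishing locus of $s_j$, so by iterating the discussion preceding \eqref{eq:int-no.=Euler-no.}, the derived intersection $Z_1\times^{\mathbf R}_X\cdots\times^{\mathbf R}_X Z_n$ is the derived zero locus $Z(s)$ of the section $s=(s_1,\dots,s_n)$ of $V:=\mathbb{V}(\oplus_j\calE_j)$.

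The orientation data need two checks. First, $V$ is $E_G$-oriented, with orientation obtained by combining the orientations of the $\calE_j$ via the multiplicativity $\Th(\oplus\calE_j)\simeq\bigotimes\Th(\calE_j)$. Second, the orientation that Definition \ref{defn:intersection_number} puts on $Z(s)$, inherited from the $Z_j$, must agree with the orientation \eqref{eq:0locus-orientation}. This holds because the fiber sequences of cotangent complexes $L_{i}\simeq \oplus i^*\calE_j[1]$ are compatible with the sum decomposition. This compatibility is the step I expect to need the most care, essentially bookkeeping of Thom twists. I would handle it by inducting on $n$, using the two-out-of-three construction of Remark \ref{lem:2-out-of-3}.

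With orientations matched, Lemma \ref{lem:equiv-defns-of-euler-number} gives that the $E_G$-intersection number $\deg(Z(s))$ equals $n^{E_G}(V,s)$. The rank hypothesis $\op{rank}V=\dim X$ is exactly the statement that $L_p$ has constant virtual rank $0$. Proposition \ref{prop:indep.-of-sec} then shows the answer is independent of the section, so it equals $n^{E_G}(V)$ and can be evaluated using any section $\sigma$.

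\emph{Step 2: isolated zeros.} For a $G$-equivariant section $\sigma$ with isolated zeros, I would apply Theorem \ref{thm:non_transverse_G_euler=local_degrees} directly. Its hypothesis is that $\sigma$ lies in some finite-dimensional $G$-invariant subspace $\tilde U\subseteq\Gamma(X,V)$. Since $X$ is proper, $\Gamma(X,V)$ is finite-dimensional, so I take $\tilde U=\Gamma(X,V)$, which is $G$-invariant. Theorem \ref{thm:non_transverse_G_euler=local_degrees} then yields $n^{E_G}(V,\sigma)=\sum\Tr^G_{G_{x_i}}\deg^{G_{x_i}}_{x_i}f$, where $f\colon Z\to U$ is as in Construction \ref{construction:map_f_for_euler}.

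\emph{Step 3: isolated and simple zeros.} If the zeros are isolated and simple and $\sigma$ is compatibly $E_G$-oriented, Proposition \ref{prop:Euler-number=local_deg} applies verbatim and gives $\sum\Tr^G_{G_{x_i}}\Tr_{k(x_i)/k}(1_{G_{x_i}})$.

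Both formulas compute $n^{E_G}(V)$, so by Step 1 they compute the $E_G$-intersection number. The only substantive content beyond citation is the orientation-compatibility check in Step 1.
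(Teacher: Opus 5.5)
Your proposal is correct and is essentially the paper's proof: the paper likewise obtains the identification with $n^{E_G}(\mathbb{V}(\oplus_i\calE_i))$ from \eqref{eq:int-no.=Euler-no.} by induction on the number of bundles, then cites Theorem~\ref{thm:non_transverse_G_euler=local_degrees} and Proposition~\ref{prop:Euler-number=local_deg}. Your observation that $\Gamma(X,V)$ is finite-dimensional because $X$ is proper supplies a hypothesis of Theorem~\ref{thm:non_transverse_G_euler=local_degrees} that the B\'ezout statement omits.

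One caution on the orientation bookkeeping you flag. Tensoring orientations of the $\calE_j$ taken in the sense of Definition~\ref{defn:rel_oriented_bdle} trivializes a twist by $n$ copies of $L_X$ rather than one. So the orientation of $\mathbb{V}(\oplus_i\calE_i)$ should instead be assembled from Thom orientations of the individual $\calE_j$ together with the orientation of $X$ assumed in Definition~\ref{defn:intersection_number}; the paper's proof also leaves this point implicit.
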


   \begin{proof}
       In view of \eqref{eq:int-no.=Euler-no.}, this follows from Theorem \ref{thm:non_transverse_G_euler=local_degrees} in the isolated case and Proposition~\ref{prop:Euler-number=local_deg} in the isolated and simple case by induction on the number of $G$-equivariant vector bundles. 
   \end{proof}

\begin{remark}\label{rmk:classical_transversality}
The computation of $\sum_{\{G\cdot x_i\subseteq Z(\sigma)\}} \Tr^G_{G_{x_i}} \deg_{x_i}^{G_{x_i}}f$ in Theorem \ref{thm:bezout} does not require the section $\sigma\colon X\to V$ to have simple \emph{and} isolated zeros, the section is only required to have isolated zeros. Even when $Z(\sigma)$ only has isolated zeros, it is still quasi-smooth. Thus (derived) base change for \eqref{eq:0locus-univ.pullback} holds, ultimately allowing us to prove local multiplicity statements like Theorems \ref{thm:bezout} and \ref{thm:non_transverse_G_euler=local_degrees}. This is a departure from the non-derived enriched setting, where the isolated and simple condition is often forced by a transversality hypothesis. When a section $\sigma$ does intersect the 0-section transversally, e.g., when the intersection of $Z_1,\ldots, Z_n$ is transverse in the case of Theorem \ref{thm:bezout}, $\pi_0(Z(\sigma))\simeq Z(\sigma)$ and the zeros of $\sigma$ are both isolated and simple. 
\end{remark}

\begin{exa}\label{example:Z4}
    Let $k$ be a field with $\op{char}(k)\neq 2$. Let $G=C_4= \langle g\rangle$ act on $\mathbb{P}_k^2$ by $g\cdot [x:y:z] = [-y:x:z]$.\footnote{We will write $G$ when using $C_4$ creates notational clutter.} Let 
    \[
    C_1 = V(x^2+y^2-z^2),\qquad C_2 = V(xy).
    \]
    The conics $C_1$ and $C_2$ determine an equivariant section of $\calO(2)\oplus \calO(2)\to \mathbb{P}_k^2$ given by $(x^2+y^2-z^2, xy)$. 
    The zeros of the section are exactly the points of intersection of $C_1$ and $C_2$, which as a set are
    \[
    \{C_1\cap C_2\} = \{[1:0:1], [0:1:1], [-1:0:1], [0:-1:1]\}. 
    \]
    The intersection $C_1\times_{\mathbb{P}^2_k} C_2$ is a $C_4$-invariant subscheme of $\mathbb{P}_k^2$. Further, the four intersection points lie in a single $C_4$-orbit and all are defined over $k$. By Theorem \ref{thm:bezout}, the $C_4$-equivariant intersection in $\KGL$ is given by the equivariant Euler number of $V:=\mathbb{V}(\calO(2)\oplus \calO(2))$,  
    \begin{align*}
    n^{\KGL_{C_4}}(V) &=  \sum_{G\cdot x_i\subseteq Z(\sigma)} \Tr^G_{G_{x_i}} \Tr_{k(x_i)/k} (1_{G_{x_i}}) \\
    &= \Tr^G_{G_{x_i}} (1_{G_{x_i}}) \\ 
    &= \Tr^{C_4}_{e} (\trivrep_k) \\ 
    &= \ind_e^{C_4} \trivrep_k \\ 
    &= k[C_4]
    \end{align*}
    in the representation ring $R_k(C_4)\cong \KGL^0_{C_4}(\Spec k)$, where $\trivrep_k$ is the trivial representation over $k$. 
\end{exa}

\begin{exa}\label{example:A4}
    We work over $\mathbb{R}$ and let $E=\KGL$. Let $G=A_4$ act on $\mathbb{P}_{\mathbb{R}}^2$ by restricting the standard $S_4$-representation on $\mathbb{R}^3$ to $A_4$ and taking the projectivization. Let
    \[
    C_1 = V(x^2-y^2+z^2-2xz), \hspace{2mm} C_2 =V(x^2-\frac{1}{2}y^2+z^2-xy-yz). 
    \]
    The conics $C_1$ and $C_2$ determine the equivariant section of $\calO(2)\oplus \calO(2)\to \mathbb{P}_{\mathbb{R}}^2$ given by $(x^2-y^2+z^2-2xz, x^2-\frac{1}{2}y^2+z^2-xy-yz)$. The zeros of the section are exactly the  intersection $C_1\times_{\mathbb{P}_{\mathbb{R}}^2} C_2$, which as a set is
    \[
    \{C_1\cap C_2\} = \{[1:2:3], [1:2:-1], [1:-2:-1], [-3:-2:-1]\}. 
    \]
    This is an $A_4$-invariant subscheme of $\mathbb{P}_{\mathbb{R}}^2$. Further, the four intersection points lie in a single $A_4$-orbit with stabilizer $A_3$. By Theorem \ref{thm:bezout}, as in Example \ref{example:Z4} the equivariant intersection in $\KGL$ is given by the Euler number of $V:=\mathbb{V}(\calO(2) \oplus \calO(2))$ 
    \begin{align*}
    n^{KGL_{A_4}}(V) &=  \sum_{G\cdot x_i\subseteq Z(\sigma)} \Tr^G_{G_{x_i}} \Tr_{k(x_i)/k} (1_{G_{x_i}}) \\
    &= \Tr^{A_4}_{A_3} (\trivrep_\mathbb{R}) \\ 
    &= \ind_{A_3}^{A_4} \trivrep_\mathbb{R} \\ 
    &= \mathbb{R}[A_4/A_3] \\ 
    & = \trivrep_{\mathbb{R}}+W
    \end{align*}
    in the representation ring $R_{\mathbb{R}}(A_4)\cong \KGL^0_{A_4}(\Spec\mathbb{R})$, where $\trivrep_\mathbb{R}$ is the trivial representation over $\mathbb{R}$ and $W$ is the unique 3-dimensional irreducible representation of $A_4$. 
\end{exa}

\begin{exa}\label{example:Z3}
We will work over $\mathbb{C}$ in this example. As before, let $E=\KGL$. Let $G=C_3 = \langle g\rangle$ act on $\mathbb{P}^2_{\mathbb{C}}$ by $g\cdot[x:y:z] = [z:x:y]$. Let 
\[
C_1 = V(x^2+y^2+z^2), \hspace{2mm}  C_2 = V(xy+xz+yz).
\] The conics $C_1$ and $C_2$ determine an equivariant section of $\calO(2)\oplus \calO(2)\to \mathbb{P}^2_{\mathbb{C}}$ given by $(x^2+y^2+z^2, xy+xz+yz)$. The zeros of the section  are exactly the points of intersection, which as a set are
    \[
    \{C_1\cap C_2\} = \{[1:\omega:\omega^2], [1:\omega^2:\omega]\},
    \]
where $\omega$ is a cube root of unity and each intersection point occurs with multiplicity 2. The intersection $C_1\times_{\mathbb{P}^2_{\mathbb{C}}} C_2$ is a $C_3$-invariant subscheme of $\mathbb{P}^2_{\mathbb{C}}$ with each point (projectively) fixed by the group action. Writing $p_1=[1:\omega:\omega^2]$ and $p_2=[1:\omega^2:\omega]$, by Theorem \ref{thm:non_transverse_G_euler=local_degrees} the Euler number of $V:= \mathbb{V}(\calO(2)\oplus \calO(2))$ is
 \[
 n^{\KGL_{C_3}}(V) = \deg^{C_3}_{p_1}f + \deg^{C_3}_{p_2}f,
\]
 where $f$ is as in Theorem \ref{thm:non_transverse_G_euler=local_degrees} and we consider the points $p_1$ and $p_2$ as derived points. Writing $Z:=C_1 \times_{\mathbb{P}^2_{\mathbb{C}}}^\mathbf{R} C_2$ for the derived intersection  and $i\colon Z\to \Spec(\mathbb{C})$, we equivalently compute $$\deg^{C_3} Z = i_!(1_Z) \in \KGL^0_{C_3}(\Spec\mathbb{C}) \cong R_{\mathbb{C}}(C_3).$$

$R^ji_*\mathcal{O}_Z = 0$ for $j>0$ since $i$ is finite, whence $i_!(1_Z) = [i_*\calO_Z] = \Gamma(Z,\calO_Z)$. 
Furthermore, 
\[
\deg^{C_3} Z = [i_*\calO_Z] = \Gamma(Z,\calO_Z) \cong \mathbb{C}[s]/(s^2)\oplus \mathbb{C}[t]/(t^2)
\]
in $R_{\mathbb{C}}(C_3)$, where the $C_3$ action on $\mathbb{C}[s]/(s^2)\oplus \mathbb{C}[t]/(t^2)$ is given by $g\cdot (s,t) = (\omega s, \omega^2 t)$ and trivial action on $\mathbb{C}$ for $g$ a generator of $C_3$. 

We wish to write $\deg^{C_3} Z$ as a sum of irreducible representations. Denote by $\chi$ the 1-dimensional irreducible $C_3$ representation given by multiplication by $\omega$, and denote by $\chi^2$ the 1-dimensional irreducible representation given by multiplication by $\omega^2$. It is clear that each of $\mathbb{C}[s]/(s^2)$ and $\mathbb{C}[t]/(t^2)$ contains a trivial representation, $\mathbb{C}[s]/(s^2)$ contains $\chi$, and $\mathbb{C}[t]/(t^2)$ contains $\chi^2$. Thus
\begin{align*}
    n^{\KGL_{C_3}}(\mathbb{V}(\calO(2)\oplus\calO(2))) &= \deg^{C_3} Z \\ 
    &= \Gamma(Z,\calO_Z) \\ 
    &= \mathbb{C}[s]/(s^2)\oplus \mathbb{C}[t]/(t^2) \\
    &= \trivrep_{\mathbb{C}} + \chi + \trivrep_{\mathbb{C}} + \chi^2 \\ 
    &=\trivrep_{\mathbb{C}} + \rho
\end{align*}
in $R_{\mathbb{C}}(C_3)$, where $\trivrep_{\mathbb{C}}$ is the trivial representation over $\mathbb{C}$ and $\rho$ is the regular representation of $C_3$. 
\end{exa}

\begin{exa}\label{example:Z3_non_lci}
    Consider the $G=C_3$ action on $\mathbb{P}^2_\mathbb{C}$ from Example \ref{example:Z3}, and again consider 
    \[
    C = V(x^2+y^2-z^2) \cong \mathbb{P}^1_{\mathbb{C}}. 
    \]
Write $f = x^2+y^2-z^2$. We will consider the derived intersection of $C$ with itself, 
\[
Z:=C \times_{\mathbb{P}^2_{\mathbb{C}}}^\mathbf{R} C
\]
and compute $\deg^{C_3}Z$ in $\KGL^0_{C_3}(\Spec \mathbb{C}) \cong R_{\mathbb{C}}(C_3)$. In this case, the intersection is not lci, and $\pi_0(Z) = C$ is not equivalent to $Z$. Thus we must compute $\deg^{C_3}Z$ as a derived intersection rather than a scheme-theoretic intersection as in Example \ref{example:Z3}. By section independence, this calculation should still give $\trivrep + \rho$ as in Example \ref{example:Z3}. 

First we compute 
\[
\calO_C \times_{\calO_{\mathbb{P}^2_{\mathbb{C}}}}^\mathbf{L} \calO_C. 
\]
Consider the Koszul resolution of $\calO_C$, 
\[
0\to \calO_{\mathbb{P}^2_{\mathbb{C}}}(-2) \stackrel{\cdot f}{\to} \calO_{\mathbb{P}^2_{\mathbb{C}}} \to \calO_C \to 0,
\]
where $\calO_{\mathbb{P}^2_{\mathbb{C}}}(-2) \stackrel{\cdot f}{\to} \calO_{\mathbb{P}^2_{\mathbb{C}}} $ is multiplication by $f$. Tensoring the resolution with $\calO_C$, we obtain 
\[
\left[\calO_{\mathbb{P}^2_{\mathbb{C}}}(-2) \stackrel{\cdot f}{\to} \calO_{\mathbb{P}^2_{\mathbb{C}}} \right]\otimes_{\calO_{\mathbb{P}^2_{\mathbb{C}}}} \calO_C = \calO_C(-2)\stackrel{\cdot 0}{\to} \calO_C, 
\]
whence 
\[ 
\calO_C \otimes_{\calO_{\mathbb{P}^2_{\mathbb{C}}}}^\mathbf{L} \calO_C = \calO_C\oplus \calO_C(-2)[1]
\]
in $D^b(\mathrm{Coh}(C))$. 

We consider $\deg^{C_3}Z$ as the exceptional pushforward of the class $[\calO_C]-[\calO_C(-2)]$ in $\KGL^0_{C_3}(C)$ along $\pi_C\colon C\to \Spec \mathbb{C}$. 
Note since $C\cong \mathbb{P}^1_{\mathbb{C}}$ and $\deg \calO_C(1) =2$, $\calO_C\cong \calO_{\mathbb{P}^1_{\mathbb{C}}}$ and $\calO_C(-2)\cong \calO_{\mathbb{P}^1_{\mathbb{C}}}(-4)$. Note also that this isomorphim is equivariant if we take the $C_3$ action on $\mathbb{P}^1_{\mathbb{C}}$ to be the action on $C$ composed with the isomorphism $C\cong \mathbb{P}^1_{\mathbb{C}}$. Specifically, the action on $\mathbb{P}^1_{\mathbb{C}}$ is $[s,t] \mapsto [s+t,t-s]$. 
Thus 
\[
\calO_C \otimes_{\calO_{\mathbb{P}^2_{\mathbb{C}}}}^\mathbf{L} \calO = \calO_{\mathbb{P}^1_{\mathbb{C}}}\oplus \calO_{\mathbb{P}^1_{\mathbb{C}}}(-4)[1].
\]

The oriented pushforward of this class computes $\deg^{C_3}Z$, and is 
\begin{align*}
(\pi_{C})^{\mathrm{or}}_!\left([\calO_{\mathbb{P}^1_{\mathbb{C}}}] - [\calO_{\mathbb{P}^1_{\mathbb{C}}}(-4)]\right) & = \chi^{C_3}(\calO_{\mathbb{P}^1_{\mathbb{C}}}) - \chi^{C_3}(\calO_{\mathbb{P}^1_{\mathbb{C}}}(-4)) \\
& = \trivrep_{\mathbb{C}} - \chi^{C_3}(\calO_{\mathbb{P}^1_{\mathbb{C}}}(-4)) \\
& = \trivrep_{\mathbb{C}} - \sum_i(-1)^i[H^i(\mathbb{P}^1_{\mathbb{C}}, \calO_{\mathbb{P}^1_{\mathbb{C}}}(-4))]. 
\end{align*}
Since $H^0(\mathbb{P}^1_{\mathbb{C}}, \calO_{\mathbb{P}^1_{\mathbb{C}}}(-4)) \simeq 0$ and by Serre duality, $H^1(\mathbb{P}^1_{\mathbb{C}}, \calO_{\mathbb{P}^1_{\mathbb{C}}}(-4))\cong H^0(\mathbb{P}^1_{\mathbb{C}}, \calO_{\mathbb{P}^1_{\mathbb{C}}}(2))^{\vee}$, whence 
\[
\deg^{C_3} Z = \pi_{C,!}\left([\calO_{\mathbb{P}^1_{\mathbb{C}}}] - [\calO_{\mathbb{P}^1_{\mathbb{C}}}(-4)]\right)  = \trivrep_{\mathbb{C}} +[H^0(\mathbb{P}^1_{\mathbb{C}}, \calO_{\mathbb{P}^1_{\mathbb{C}}}(2))^{\vee}]. 
\]
Writing $V$ for the 2-dimensional $C_3$ representation such that $\mathbb{P}^1_{\mathbb{C}} \cong \mathbb{P}V$, it is well known that $H^0(\mathbb{P}^1_{\mathbb{C}}, \calO_{\mathbb{P}^1_{\mathbb{C}}}(2))^{\vee} = \op{Sym}^2V^\vee$. To actually compute this, note that restriction along $C\hookrightarrow \mathbb{P}^2_{\mathbb{C}}$ gives $H^0(\mathbb{P}^2_{\mathbb{C}}, \calO(1))\cong H^0(\mathbb{P}^1_{\mathbb{C}}, \calO(2))$, which is the 3-dimensional permutation representation $\trivrep_{\mathbb{C}} + \chi +\chi^2$, using the notation from Example \ref{example:Z3}. Thus 
\[
[H^0(\mathbb{P}^1_{\mathbb{C}}, \calO_{\mathbb{P}^1_{\mathbb{C}}}(2))^{\vee}] = (\trivrep_{\mathbb{C}} +\chi +\chi^2)^\vee = \trivrep_{\mathbb{C}} +\chi^2 + \chi.
\]
We conclude that 
\[
\deg^{C_3} Z = \trivrep_{\mathbb{C}} + (\trivrep_{\mathbb{C}} + \chi+\chi^2) = \trivrep_{\mathbb{C}} + \rho, 
\]
as expected by Example \ref{example:Z3} and section independence (Proposition \ref{prop:indep.-of-sec}). 
\end{exa}

 In the examples above, we have not shown that $\sigma$ is compatibly oriented or that the bundle $\mathbb{V}(\calO(2)\oplus \calO(2))$ are $\KGL$-oriented. This is because all bundles are canonically $\KGL$-oriented under our assumptions by \cite[Theorem~1.3(3)]{hoyois-cdh}. Note that by Proposition \ref{prop:indep.-of-sec}, in each of the examples above, any other choice of sections defining conics that are invariant under the same group action will have equivariant intersection number equal to those computed above.

\bibliographystyle{amsalpha}
\bibliography{bib_alg_G_degree}

{\small
\noindent
Department of Mathematics, Brown University, Providence, Rhode Island, 02912, USA 

\noindent
School of Mathematics, Tata Institute of Fundamental Research, Mumbai,
400005, India}

\end{document}